\newtheorem{theorem}{Theorem}[section]
\newtheorem*{theorem*}{Theorem}
\newtheorem{proposition}[theorem]{Proposition}
\newtheorem{lemma}[theorem]{Lemma}
\newtheorem{corollary}[theorem]{Corollary}
\newtheorem{example}[theorem]{Example}
\newtheorem{condition}{Condition}
\newtheorem{definition}[theorem]{Definition}
\newcommand{\wrt}[1]{\, \mathrm{d} #1}
\newcommand{\pd}[2]{\frac{\partial #1}{\partial #2}}
\newcommand{\fqquad}{\qquad \qquad \qquad \qquad}
\newcommand{\abs}[1]{\left\lvert#1\right\rvert}
\newcommand{\norm}[1]{\left\lVert#1\right\rVert}
\author{Nengli Lim}
\begin{document}
\title{\huge Young-Stieltjes integrals with respect to Volterra covariance functions}
\maketitle

\begin{abstract}
Complementary regularity between the integrand and integrator is a well known condition for the integral $\int_0^T f(r) \wrt{g(r)}$ to exist in the Riemann-Stieltjes sense. This condition also applies to the multi-dimensional case, in particular the 2D integral \\ $\int_{[0, T]^2} f(s,t) \wrt{g(s,t)}$. In the paper, we give a new condition for the existence of the integral under the assumption that the integrator $g$ is a Volterra covariance function. We introduce the notion of strong H\"{o}lder bi-continuity, and show that if the integrand possess this property, the assumption on complementary regularity can be relaxed for the Riemann-Stieltjes sums of the integral to converge.
\end{abstract}

\section{Introduction}
The theory of the Young-Stieltjes integral is one of the main foundations of rough paths theory \cite{fh2014, lyons98, lq2003, lcl2006}, which gives new estimates and insight into stochastic integrals by allowing them to be examined in a path-wise, or deterministic, manner in addition to the usual probabilistic approach. In the one-dimensional case, the key contribution of Young-Stieltjes integration is that the integrator need not be of bounded variation; rather, the integrator and integrand need only have complementary regularity, i.e. they only need to be of finite $p$ and $q$ variation respectively where $\frac{1}{p} + \frac{1}{q} > 1$. \par 

This notion of complementary regularity for integration has also been extended to the multi-dimensional case \cite{towghi2002}. For a function of two variables, 2D $p$-variation over a region, say $[0, T]^2$, is defined as
\begin{align} 
\left\| f\right\|_{p-var;[0,T]^{2}}:=\sup_{\pi }\left( \sum_{i,j} \left|
f \begin{pmatrix}
u_{i},u_{i+1} \\ 
v_{j},v_{j+1}
\end{pmatrix}
\right|^p \right) ^{\frac{1}{p}},
\end{align}
where the supremum runs over all partitions $\pi := \left\{ \left( u_i, v_j	\right) \right\}$ of $[0, T]^2$. Theorem 1.2(a) in \cite{towghi2002} shows that the Stieltjes sums of $\int_{[0, T]^2} f \wrt{g}$ converge if the integrand and integrator have complementary 2D variation. In particular, this result has been used in \cite{fv2010a} to construct the geometric rough path over Gaussian processes with low regularity, and in the recent paper \cite{cl18}, which gives a link between the Cameron-Martin norm and the 2D Young integral, and allows for the generalization of the classical It\^{o}-Stratonovich formula beyond standard Brownian motion. \par

In this paper, we will show that given that the integrator is a covariance function, and under a certain natural condition on its structure, one can bypass the requirement of complementary regularity for the existence of $\int_{[0, T]^2} f \wrt{g}$ in the Young-Stieltjes sense.  More precisely, we can weaken this requirement and consider integrands with finite 2D $p$-variation where $p \geq 3$, even if the integrator only has finite 2D $q$-variation with $q \geq \frac{3}{2}$. This case arises in situations where the functions have low regularity, e.g. when considering integrals w.r.t. the covariance function of fractional Brownian motion with Hurst parameter $H \leq \frac{1}{3}$ \cite{fv2010a}. \par 

Before we proceed, we will review other results in the literature to motivate the need for a new approach. To simplify things and be concrete, we will illustrate their inadequacy by considering integrands of the form 
\begin{align} \label{productEg}
f(s, t) = f_1(s) f_2(t),
\end{align}
where we assume, for reasons of symmetry, that $f_1$ and $f_2$ are both continuous with finite $p$-variation for $p \geq 3$ but infinite $p$-variation if $p < 3$. \par

One of the earliest results for multi-dimensional integration can be found in Young's original paper \cite{young1938}; see also \cite{ohashi} for a recent application to Brownian motion local time. Here, the result is formulated in terms of bivariations rather than complementary regularity. We say that $f: [0, T]^2 \rightarrow \mathbb{R}$ has finite $(p, q)$-bivariation if 
\begin{align*}
&\sup_{t_1, t_2 \in [0, T]} \norm{f(\cdot, t_2) - f(\cdot, t_1)}_{p-var; [0, T]} < \infty \quad \mathrm{and} \\
&\sup_{s_1, s_2 \in [0, T]} \norm{f(s_2, \cdot) - f(s_1, \cdot)}_{q-var; [0, T]} < \infty.
\end{align*}
Moreover, if we also assume that the integrator $g$ satisfies 
\begin{align} \label{biVarGCond}
\abs{g\begin{pmatrix}
s_i & s_{i+1} \\
t_j & t_{j+1}
\end{pmatrix}}
\leq C \abs{s_{i+1} - s_i}^{\frac{1}{\tilde{p}}} \abs{t_{j+1} - t_j}^{\frac{1}{\tilde{q}}},
\end{align}
(the definition of the rectangular increment is given in \eqref{rectInc}) and if there exists $\alpha \in (0, 1)$ such that
\begin{align} \label{bivarCond}
\frac{\alpha}{p} + \frac{1}{\tilde{p}} > 1 \quad \mathrm{and} \quad \frac{(1 - \alpha)}{q} + \frac{1}{\tilde{q}} > 1,
\end{align}
then the integral $\int_{[0, T]^2} f \wrt{g}$ exists in the Young-Stieltjes sense; see e.g. Corollary 1.1 in \cite{ohashi}. \par 

However, we see that if the integrand is of product form \eqref{productEg}, then it is at best of $(p, p)$-bivariation with $p \geq 3$, and we can also take $\alpha = \frac{1}{2}$ by symmetry. \eqref{bivarCond} then dictates that the integrator must satisfy \eqref{biVarGCond} with $\tilde{p} = \tilde{q} < \frac{6}{5}$ for the existence of the integral. \par 

Returning to \cite{towghi2002}, Theorem 1.2(b) in the paper gives a slight generalization on the condition on complementary regularity by also allowing one to consider the variation on each variable separately. To do so, we have the notion of mixed variation, where we say that $f \in V^{(p, q)-var}([0, T]^2)$ if 
\begin{align*}
\sup_{\pi} \left( \sum_i \left( \sum_j \left| f 
\begin{pmatrix}
s_i & s_{i+1} \\ 
t_j & t_{j+1}
\end{pmatrix}
\right|^p \right)^{\frac{q}{p}} \right)^{\frac{1}{q}} < \infty,
\end{align*}
and the 2D integral exists if the integrator is in $V^{(\tilde{p}, \tilde{q})-var}([0, T]^2)$ with $\frac{1}{p} + \frac{1}{\tilde{p}} > 1$ and $\frac{1}{q} + \frac{1}{\tilde{q}} > 1$. Although this is an improvement over the bivariation condition in \eqref{bivarCond}, we see that in the symmetric case \eqref{productEg}, $p$ must equal $q$ and we are back to the case where $f \in V^{p-var}([0, T]^2)$ with $p \geq 3$, which means that the integrator is required to be of finite 2D $q$-variation with $q < \frac{3}{2}$. \par 

In this paper, we will improve the existing results such that $q$ can be greater than $\frac{3}{2}$, under the assumption that the integrator has Volterra structure, as is the case when it is the covariance function of a Gaussian process with the same regularity as the integrand; see \cite{fv2010a}. The techniques we use stem from the fractional calculus, but we have generalized the analysis to deal with general Volterra operators that include the fractional operators. In the process, we discover that in place of complementary regularity, what we need is the notion of strong-bicontinuity in the H\"{o}lder sense, which allows us to extend the class of integrands included in the result beyond those of the form \eqref{productEg}. \par

The paper is organized as follows: Section 2 give a brief introduction to the preliminary information and notation that will be used throughout the sequel. In Section 3, we summarize Volterra kernels and their relation to covariance functions, and we also introduce strongly H\"{o}lder bi-continuous functions. Finally in Section 4, we state the main theorem (Theorem \ref{nualartPropNew}) and provide its proof, as well as apply it to show convergence of the Stieltjes sums of a 2D integral without the need for complementary regularity. \par 

The bulk of the content in this paper can be found in the author's doctoral dissertation \cite{lim2016}.

\section{Preliminaries}

\subsection{2D Young-Stieltjes integration}
For a function defined on $[0,T]^{2}$, $f:[0,T]^{2}\rightarrow E$ is said to
be of finite 2D $p$-variation if 
\begin{align} \label{2DpvarBound}
\left\| f\right\|_{p-var;[0,T]^{2}}:=\sup_{\pi }\left( \sum_{i,j}\left\Vert
f \begin{pmatrix}
u_{i},u_{i+1} \\ 
v_{j},v_{j+1}
\end{pmatrix}
\right\Vert _{E}^{p}\right) ^{\frac{1}{p}}<\infty,
\end{align}
where $\pi =\left\{ \left( u_{i},v_{j}\right) \right\} $ is a partition of $[0, T]^2$, and the rectangular increment is given by 
\begin{align}  \label{rectInc}
f \begin{pmatrix}
u_{i},u_{i+1} \\ 
v_{j},v_{j+1}
\end{pmatrix}
:= f(u_{i},v_{j})+f(u_{i+1},v_{j+1})-f(u_{i},v_{j+1})-f(u_{i+1},v_{j}).
\end{align}
We will use $V^{p-var} \left(  [0, T]^{2}; E\right)$ (resp. $\mathcal{C}^{p-var} \left( [0, T]^{2}; E\right) $) to denote the set
of functions (resp. continuous functions) which satisfy \eqref{2DpvarBound}. We will use the notation 
\begin{align*}
f( \Delta_i, v) := f(u_{i+1}, v) - f(u_i, v), \\
f(u, \Delta_j) := f(u, v_{j+1}) - f(u, v_j),
\end{align*}
as well as $\norm{g}_{p-var; [s, t]}$ for the one-dimensional $p$-variation of a function $g:[0, T] \rightarrow \mathbb{R}$ over the interval $[s, t]$. \\

\begin{definition}
We say that the 2D Young-Stieltjes integral of $f$ with respect to $g$ exists if there exists a scalar $I(f, g) \in \mathbb{R}$ such that 
\begin{align}  \label{quant1}
\lim_{\left\| \pi\right\| \rightarrow 0} \left| \sum_{i, j} f\left( u_i, v_j
\right) g \begin{pmatrix}
u_i & u_{i+1} \\ 
v_j & v_{j+1}%
\end{pmatrix}
- I(f, g) \right| \rightarrow 0,
\end{align}
i.e. for each $\varepsilon > 0$, there exists a $\delta > 0$ such that for all partitions $\pi = \{ (u_i, v_j) \}$ of $[0, T]^2$ with mesh $\left\|\pi\right\| < \delta$, the quantity on the left of \eqref{quant1} is less than $\varepsilon$. In this case, we use $\int_{[0, T]^2} f \, \mathrm{d} g$ to denote $I(f, g)$, or $\int_{[s,t] \times [u, v]} f \, \mathrm{d} g$ whenever we restrict ourselves to any particular subset $[s, t] \times [u,v]$ of $[0, T]^2$.
\end{definition}

\begin{definition}
We say that $f \in V^{p-var}([s, t] \times [u,v])$ and $g \in V^{q-var}([s, t] \times [u,v])$ have complementary regularity if $\frac{1}{p} + \frac{1}{q} > 1$.
\end{definition}

The significance of this definition lies in the following theorem, which gives the existence of the Young-Stieltjes integral and Young's inequality in two dimensions; see \cite{lcl2006}, \cite{fh2014}, \cite{fv2010b} for the
one-dimensional version.

\begin{theorem} \label{2Dintegral} 
Let $f \in V^{p-var}([s, t] \times[u, v])$ and $g \in V^{q-var}([s, t] \times[u, v])$ have complementary regularity. We also assume
that $f(s, \cdot)$ and $f(\cdot, u)$ have finite p-variation, and that $f$ and $g$ have no common discontinuities. Then the 2D Young-Stieltjes integral exists and the following Young's inequality holds;
\begin{align} \label{2DYoungIneq}
\begin{split}
\left|  \int_{[s,t] \times[u,v]} f \, \mathrm{d} g\right|  \quad\leq C_{p, q}
\, {\left\vert \kern-0.25ex\left\vert \kern-0.25ex\left\vert f \right\vert
\kern-0.25ex\right\vert \kern-0.25ex\right\vert } \left\|  g\right\| _{q-var,
[s,t] \times[u,v]},
\end{split}
\end{align}
where
\begin{align*}
{\left\vert \kern-0.25ex\left\vert \kern-0.25ex\left\vert f \right\vert
\kern-0.25ex\right\vert \kern-0.25ex\right\vert } = \left|  f(s, u)\right|  +
\left\|  f(s, \cdot)\right\| _{p-var; [u, v]} + \left\|  f(\cdot, u)\right\|
_{p-var; [s, t]} + \left\|  f\right\| _{p-var, [s,t] \times[u,v]}.
\end{align*}
\end{theorem}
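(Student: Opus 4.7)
My plan is to follow the two-dimensional adaptation of Young's classical ``drop-a-point'' argument, with a super-additive control function on subrectangles of $R := [s,t]\times[u,v]$. I will show that the Riemann sums $S(\pi)$ are Cauchy as $\|\pi\|\to 0$, and the triple-norm bound \eqref{2DYoungIneq} will fall out by passing to the limit in the accumulated estimate.

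For the control I set $\omega(A) := \|f\|_{p-var;A}^p + \|g\|_{q-var;A}^q$ on closed subrectangles $A\subseteq R$. By the definition of 2D $p$-variation, $\omega$ is super-additive when $A$ is split by a horizontal or vertical line, and the no-common-discontinuities hypothesis ensures $\omega$ vanishes on degenerate strips as the mesh shrinks.

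For a grid partition $\pi=\pi_u\times\pi_v$, removing an interior $u$-node $u_i$ and using the rectangle identity $g\begin{pmatrix} u_{i-1} & u_{i+1} \\ v_j & v_{j+1}\end{pmatrix} = g\begin{pmatrix} u_{i-1} & u_i \\ v_j & v_{j+1}\end{pmatrix} + g\begin{pmatrix} u_i & u_{i+1} \\ v_j & v_{j+1}\end{pmatrix}$ collapses $S(\pi)-S(\pi^{(i)})$ into a 1D Riemann--Stieltjes sum in $v$ of $\psi(v):=f(u_i,v)-f(u_{i-1},v)$ against $\phi(v):=g(u_{i+1},v)-g(u_i,v)$. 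The 1D Young inequality, anchored at $v_0=u$, bounds this by
\[
|\psi(u)|\cdot\bigl|g\begin{pmatrix} u_i & u_{i+1} \\ u & v\end{pmatrix}\bigr| + C_{p,q}\,\|f\|_{p-var;S_i^u}\,\|g\|_{q-var;S_i^u}, \qquad S_i^u := [u_{i-1},u_{i+1}]\times[u,v],
\]
with a symmetric estimate for removing $v$-nodes. Super-additivity of $\omega$ along $u$-slabs yields an $i^\ast$ with $\omega(S_{i^\ast}^u)\le 2\omega(R)/(N-1)$; iterating the drop and alternating with analogous $v$-drops, the bulk terms sum to $\omega(R)^{1/p+1/q}\sum_{k\ge 1}k^{-(1/p+1/q)}$, a convergent series because $\tfrac{1}{p}+\tfrac{1}{q}>1$, which produces the $\|f\|_{p-var;R}\,\|g\|_{q-var;R}$ summand. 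The accumulated anchor contributions from $u$-drops reorganise, via a second 1D Young argument, into $\|f(\cdot,u)\|_{p-var;[s,t]}\,\|g\|_{q-var;R}$; those from $v$-drops into $\|f(s,\cdot)\|_{p-var;[u,v]}\,\|g\|_{q-var;R}$; and the final 4-corner residual is $f(s,u)\cdot g\begin{pmatrix} s & t \\ u & v\end{pmatrix}$. Comparing any two partitions through their common refinement gives Cauchyness and existence of $I(f,g)$, and the triple-norm bound follows on taking the limit.

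\textbf{Main obstacle.} The delicate part is the anchor-term bookkeeping: the $|\psi(u)|\cdot|g\begin{pmatrix} u_i & u_{i+1} \\ u & v\end{pmatrix}|$ pieces, accumulated over the $u$-drops, must be recognised as a 1D Riemann sum of $f(\cdot,u)$ against $u\mapsto g(u,v)-g(u,u)$ so that a final one-dimensional Young application produces precisely the $\|f(\cdot,u)\|_{p-var;[s,t]}$ summand. Since the pigeonhole dictates a possibly non-monotone order of point removal, making this bookkeeping work requires either an order-invariance argument or a two-stage scheme in which all interior $u$-points are removed first (with uniform-in-$\pi_v$ control), and only then the remaining interior $v$-points.
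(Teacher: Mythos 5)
The paper offers no proof of this theorem at all --- it simply cites Towghi and Friz--Victoir --- so there is no internal argument to compare against; your proposal is a reconstruction of the cited proof. Your overall strategy is the correct one: Towghi's argument is precisely a 2D drop-a-point/pigeonhole scheme, the control $\omega(A)=\|f\|_{p-var;A}^p+\|g\|_{q-var;A}^q$ is super-additive under splitting a rectangle by a single grid line (which is all the slab pigeonhole needs), and your identification of $S(\pi)-S(\pi^{(i)})$ with a 1D Stieltjes sum of $f(u_i,\cdot)-f(u_{i-1},\cdot)$ against $g(u_{i+1},\cdot)-g(u_i,\cdot)$ is exactly right.

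The gap is the one you flag yourself, and it is genuine: the anchor terms $|f(u_i,u)-f(u_{i-1},u)|\cdot\bigl|g\begin{pmatrix} u_i & u_{i+1}\\ u & v\end{pmatrix}\bigr|$ are generated in the pigeonhole-dictated order on successively coarser partitions, and they do not assemble into a single 1D Riemann sum of $f(\cdot,u)$ without an order-invariance argument you have not supplied. The standard way to avoid this entirely --- and the structural reason the triple norm has exactly four summands --- is to decompose the integrand at the corner \emph{before} dropping any points: write $f(u_i,v_j)=f(s,u)+\bigl[f(u_i,u)-f(s,u)\bigr]+\bigl[f(s,v_j)-f(s,u)\bigr]+f\begin{pmatrix} s & u_i\\ u & v_j\end{pmatrix}$. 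The constant piece contributes $f(s,u)\,g\begin{pmatrix} s & t\\ u & v\end{pmatrix}$ exactly; the two single-variable pieces are handled by two separate one-dimensional Young inequalities against the marginal increments of $g$, producing the $\|f(\cdot,u)\|_{p-var;[s,t]}$ and $\|f(s,\cdot)\|_{p-var;[u,v]}$ summands; and the 2D drop-a-point induction is applied only to the doubly-incremented piece, which vanishes whenever $u_i=s$ or $v_j=u$, so the anchor term in your 1D Young estimate is identically zero and no bookkeeping is required. Two smaller points: the bulk estimate as you set it up yields $\omega(R)^{1/p+1/q}=(\|f\|_{p-var}^p+\|g\|_{q-var}^q)^{1/p+1/q}$ rather than the product $\|f\|_{p-var}\|g\|_{q-var}$, so a homogenization step (rescale $f$ and $g$ by their respective variation norms) is needed at the end; and convergence of the sums as $\|\pi\|\to 0$, rather than merely along refinements, is where the no-common-discontinuities hypothesis must be invoked carefully, not just the vanishing of $\omega$ on thin strips.
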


\begin{proof}
See \cite{towghi2002}, \cite{fv2010a}.
\end{proof}

\subsection{Volterra processes}
A Volterra kernel $K$ is a square-integrable function $K:\left[ 0,T \right]^2 \rightarrow \mathbb{R}$ such that $K(t,s)=0\;\forall s\geq t$. Associated with any Volterra kernel is a lower triangular, Hilbert-Schmidt operator $\mathbb{K}:L^{2}\left( \left[ 0,T \right] \right) \rightarrow L^{2}\left( \left[ 0,T\right] \right) $ given by 
\begin{align*}
\mathbb{K} \left( f\right) \left( \cdot \right) =\int_{0}^{T}K\left( \cdot, \, s\right)
f\left( s\right) \wrt{s} \text{ for all }f\in L^{2}\left( \left[ 0,T\right] \right).
\end{align*}
Given a standard Brownian motion $B$ and a Volterra kernel $K$, we define a Volterra process $X=\left( X_{t}\right)_{t\in \left[ 0,T\right] }$ as the It\^{o} integral 
\begin{align} \label{voltrep}
X_{t}=\int_{0}^t K(t,s)\,\mathrm{d}B_{s};  
\end{align}
this is a centered Gaussian process with covariance function \cite{udfond1999} 
\begin{align*}
R(s,t)=\int_{0}^{t\wedge s}K(t,r)K(s,r)\,\mathrm{d}r.
\end{align*}

\begin{example}
\begin{enumerate}[(i)]
\item Standard fractional Brownian motion $B^{H}$, with Hurst parameter $H\in (0,1)$, is the centered Gaussian process with covariance function 
\begin{align} \label{covfbm}
R\left( s,t\right) =\frac{1}{2}\left( s^{2H}+t^{2H}-\left\vert t-s\right\vert ^{2H}\right).  
\end{align}
It is well-known that $B^{H}$ has a Volterra representation of the form \cite{udfond1999}
\begin{align} \label{fbmKernel}
K(t, s) := \frac{1}{\Gamma (H + \frac{1}{2})} (t - s)^{H - \frac{1}{2}} F \left( H-\frac{1}{2}, \frac{1}{2} - H, H + \frac{1}{2}, 1 - \frac{t}{s} \right),
\end{align}
where $F(a,b,c,z) := \sum_{k=0}^{\infty} \frac{(a)_k (b)_k}{(c)_k k!} z^k$ is the Gauss hyper-geometric function.
\item The Riemann-Liouville process with Hurst parameter $H\in \left(0,1\right) $ is determined by the kernel $K(t,s):=C_{H}(t-s)^{H-\frac{1}{2}} \mathds{1}_{[0, t)}(s)$. Like the fractional Brownian motion, it is a self-similar process with variance $t^{2H}$; however, it does not have stationary increments.
\end{enumerate}
\end{example}

We will need the following condition on the kernel $K$.
\begin{condition} \label{amnCond}
There exists constants $C < \infty$ and $\alpha \in \left[ 0, \frac{1}{4} \right)$ such that
\begin{enumerate} [(i)]
\item $\left| K(t,s) \right| \leq C s^{-\alpha} (t - s)^{-\alpha}$ for all $0 < s < t \leq T$.
\item $\frac{\partial K(t,s)}{\partial t}$ exists for all $0 < s < t \leq T$
and satisfies $\left| \frac{\partial K(t, s)}{\partial t} \right| \leq C
\left( t-s \right)^{-(\alpha + 1)}$.
\end{enumerate}
\end{condition}
For standard fractional Brownian motion with the kernel is given by \eqref{fbmKernel}, for any $H \in (0, 1)$, we have (see Theorem 3.2 in \cite{udfond1999}) 
\begin{align} \label{kBound1}
|K (t,s)| \leq C_{1, H} \left( s^{-\left| H - \frac{1}{2} \right|} \right) (t - s)^{-\left( \frac{1}{2} - H \right)},
\end{align}
for all $0 < s < t \leq T$, and we also have 
\begin{align} \label{kBound2}
\frac{\partial K(t, s)}{\partial t} = C_{2, H} \left( \frac{t}{s} \right)^{H - \frac{1}{2}} (t - s)^{- \left(\frac{3}{2} - H \right)};
\end{align}
see \cite{ccm2003} and \cite{nualart2006}. Thus, Condition \ref{amnCond} is satisfied by the kernel as a consequence of the bounds above. 

Given a Banach space $E$ and a kernel $K$ satisfying Condition \ref{amnCond}
for some $\alpha \in \left[ 0,\frac{1}{4}\right) $, we introduce the linear
operator $\mathcal{K}^{\ast }$ (see \cite{amn2001}, \cite{dfond2005}) 
\begin{align} \label{kStarDefn}
\left( \mathcal{K}^{\ast }\phi \right) (s):=\phi \left( s\right)
K(T,s)+\int_{s}^{T}\left[ \phi \left( r\right) -\phi \left( s\right) \right]
K(\mathrm{d}r,s),
\end{align}
where the signed measure $K(\mathrm{d}r,s):=\frac{\partial K(r,s)}{\partial r}\,\mathrm{d}r$. The domain $D\left( \mathcal{K}^{\ast }\right) $ of $\mathcal{K}^{\ast }$ consists of measurable functions $\phi :\left[ 0,T\right] \rightarrow E$ for which the integral on the right-hand side exists. In particular, if $\phi $ is a $\lambda $-H\"{o}lder continuous function in the norm of $E$ for some $\lambda >\alpha $, then one can verify simply that $\phi \in D\left( \mathcal{K}^{\ast }\right)$, and $\mathcal{K}^* \phi $ is in $L^{2}([0,T];E)$. Note also that for any $a$ in $[0,T]$, $\phi \mathds{1}_{[0,a)}$ is in $D\left( \mathcal{K}^{\ast }\right) $ whenever $\phi $ is, and we have the identity 
\begin{align*}
\mathcal{K}^{\ast }\left( \phi \mathds{1}_{[0,a)}\right) (s)=\mathds{1}%
_{[0,a)}(s)\left( \phi (s)K(a,s)+\int_{s}^{a}\left[ \phi (r)-\phi (s)\right]
K(\mathrm{d}r,s)\right) .
\end{align*}

Suppose that $\phi :\left[ 0,T\right] \rightarrow \mathbb{R}$ is absolutely continuous with $\phi (0)=0$. Let $D_{0^{+}}^{1}$ be
the derivative operator $D_{0^{+}}^{1}(\phi )=\phi'$, and let $D_{T^{-}}^{1}$ denote its adjoint 
\begin{align*}
D_{T^{-}}^{1}(\phi )(t):=\phi (T)\delta _{T}(t)-\phi ^{\prime }(t),
\end{align*}
where $\delta _{T}$ is the Dirac mass at $T$. Using integration-by-parts, we have 
\begin{align*}
\left( \mathcal{K}^* \phi \right) (s)& =\phi (T)K(T,s)-\int_{s}^{T}\phi'(r)K(r,s)\,\mathrm{d}r \\
& =\left( \left( \mathbb{K}^* \circ D_{T^{-}}^{1}\right) \phi \right) (s),
\end{align*}
and note that the adjoint $\mathcal{K}$ of $\mathcal{K}^*$ is equal to $D_{0^{+}}^{1}\circ \mathbb{K}$. Thus, if $K(t,s)=\frac{1}{\Gamma (1-\alpha )}(t-s)^{-\alpha }$ as in the case of the Riemann-Liouville process, then $\mathcal{K}$ is equal to the fractional derivative $D_{0^{+}}^{\alpha }$, and $\mathcal{K}^*$ is its adjoint $D_{T^{-}}^{\alpha }$, where \eqref{kStarDefn} is written in Marchaud form; cf, \cite{skm1993}. \\

\begin{proposition}	\label{nualartProp} 
Let $\left( E,\left\Vert \cdot \right\Vert _{E}\right)$ be a Banach space and $K:\left[ 0,T\right] ^{2}\rightarrow \mathbb{R}$ be a	kernel satisfying Condition \ref{amnCond} for some $\alpha \in \left[ 0, \frac{1}{4}\right) $. Let $\phi :[0,T]\rightarrow E$ be $\lambda $-H\"{o}lder continuous, i.e. there exists $C<\infty $ such that 
\begin{align*}
\left\Vert \phi (t_{1})-\phi (t_{2})\right\Vert _{E}\leq
C\,|t_{1}-t_{2}|^{\lambda },\quad \forall t_{1},t_{2}\in \lbrack 0,T],
\end{align*}
and for any partition $\pi =\{s_{i}\}$ of $[0,T]$, let $\phi ^{\pi}:[0,T]\rightarrow E$ denote 
\begin{align*}
\phi ^{\pi }(s)=\sum_{i}\phi (s_{i})\mathds{1}_{[s_{i},s_{i+1})}(s).
\end{align*}
Then if $\lambda >\alpha $, we have 
\begin{equation*}
\lim_{\left\Vert \pi \right\Vert \rightarrow 0}\int_{0}^{T}\left\Vert \mathcal{K}^{\ast }\left( \phi ^{\pi }-\phi \right) (s) \right\Vert _{E}^{2}\, \mathrm{d}s = 0,
\end{equation*}
where $\mathcal{K}^{\ast }$ is defined as in \eqref{kStarDefn}.
\end{proposition}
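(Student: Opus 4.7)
The plan is to apply the defining formula \eqref{kStarDefn} directly to $\psi := \phi^\pi - \phi$ in place of $\phi$, split the resulting expression into two pieces, and bound each in $L^2$. For $s \in [s_i, s_{i+1})$, the crucial observation is that $\psi(s) = \phi(s_i) - \phi(s)$, whose $E$-norm is controlled by $C |s - s_i|^\lambda \leq C \|\pi\|^\lambda$ via the H\"older hypothesis. Writing $K(\mathrm{d}r, s) = \frac{\partial K(r,s)}{\partial r}\,\mathrm{d}r$ and using the triangle inequality, one has
\[
\| \mathcal{K}^\ast \psi(s) \|_E \;\leq\; \underbrace{\|\psi(s)\|_E\, |K(T,s)|}_{=:\, I_1(s)} \;+\; \underbrace{\int_s^T \|\psi(r) - \psi(s)\|_E \left| \frac{\partial K(r,s)}{\partial r} \right| \mathrm{d}r}_{=:\, I_2(s)}.
\]

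For $I_1$, Condition \ref{amnCond}(i) gives $|K(T,s)| \leq C s^{-\alpha}(T-s)^{-\alpha}$, so $\int_0^T I_1(s)^2 \wrt{s} \leq C \|\pi\|^{2\lambda} \int_0^T s^{-2\alpha}(T-s)^{-2\alpha} \wrt{s}$, and the beta-type integral is finite because $2\alpha < 1$; this contribution is thus $O(\|\pi\|^{2\lambda})$.

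The delicate term is $I_2$. I would split the inner integral at $s_{i+1}$. On $(s, s_{i+1})$ both endpoints sit in the same partition cell, so $\psi(r) - \psi(s) = \phi(s) - \phi(r)$ and H\"older continuity yields the bound $C|r-s|^\lambda$; combined with Condition \ref{amnCond}(ii), the integrand has singularity $(r-s)^{\lambda - \alpha - 1}$, which is integrable precisely because $\lambda > \alpha$, and the integral is $O((s_{i+1} - s)^{\lambda - \alpha})$. On $(s_{i+1}, T)$ the partition-indexed terms no longer cancel exactly, but one still has the uniform bound $\|\psi(r) - \psi(s)\|_E \leq 2C\|\pi\|^\lambda$; combined with $(r-s)^{-(\alpha+1)}$ and integrated, this yields $O(\|\pi\|^\lambda (s_{i+1} - s)^{-\alpha})$. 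Squaring, integrating in $s$ over $[s_i, s_{i+1})$ and summing in $i$, the two pieces contribute respectively $\sum_i (s_{i+1} - s_i)^{1 + 2(\lambda - \alpha)} \leq T \|\pi\|^{2(\lambda - \alpha)}$ and, using $2\alpha < 1$, $\|\pi\|^{2\lambda} \sum_i (s_{i+1} - s_i)^{1 - 2\alpha} \leq T \|\pi\|^{2(\lambda - \alpha)}$.

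The main obstacle is the balancing inside $I_2$: the same-cell piece needs $\lambda > \alpha$ to absorb the kernel singularity near $r = s$, while the cross-cell piece needs $2\alpha < 1$ (well inside Condition \ref{amnCond}'s $\alpha < 1/4$) in order to integrate the residual blow-up $(s_{i+1} - s)^{-2\alpha}$ when moving $s$ to the right endpoint of its cell. The joint hypotheses of Condition \ref{amnCond} and $\lambda > \alpha$ are precisely what make both estimates close simultaneously, giving $\int_0^T \|\mathcal{K}^\ast \psi(s)\|_E^2 \wrt{s} = O(\|\pi\|^{2(\lambda - \alpha)}) \to 0$ as $\|\pi\| \to 0$.
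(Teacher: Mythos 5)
Your decomposition into $I_1$ and $I_2$, and your treatment of $I_1$ and of the same-cell part of $I_2$, match the paper's argument (these correspond to its terms $\xi_{i,1}$ and $\xi_{i,2}$). The gap is in the cross-cell part of $I_2$. There you bound $\norm{\psi(r)-\psi(s)}_E$ uniformly by $2C\norm{\pi}^{\lambda}$, integrate $(r-s)^{-(\alpha+1)}$ over $(s_{i+1},T)$ to get $O(\norm{\pi}^{\lambda}(s_{i+1}-s)^{-\alpha})$, and after squaring and integrating arrive at $\norm{\pi}^{2\lambda}\sum_i(s_{i+1}-s_i)^{1-2\alpha}$, which you claim is at most $T\norm{\pi}^{2(\lambda-\alpha)}$. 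That inequality goes the wrong way: since $s_{i+1}-s_i\le\norm{\pi}$ and $-2\alpha\le 0$, one has $(s_{i+1}-s_i)^{-2\alpha}\ge\norm{\pi}^{-2\alpha}$, so in fact $\sum_i(s_{i+1}-s_i)^{1-2\alpha}\ge T\norm{\pi}^{-2\alpha}$, and there is no matching upper bound for nonuniform partitions. Concretely, if half of $[0,T]$ is partitioned into cells of length $\delta$ and the other half into cells of length $\delta^{n}$, the mesh is $\delta$ but $\sum_i(s_{i+1}-s_i)^{1-2\alpha}\ge\tfrac{T}{2}\delta^{-2n\alpha}$, so your bound is of order $\delta^{2\lambda-2n\alpha}$ and diverges once $n>\lambda/\alpha$. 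Since the proposition must hold for arbitrary partitions with vanishing mesh, this step does not close (except in the degenerate case $\alpha=0$).

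The paper avoids this by splitting $\psi(r)-\psi(s)=\big(\phi(s_k)-\phi(r)\big)-\big(\phi(s_i)-\phi(s)\big)$ on the far region and estimating the two pieces separately (its $I_3$ and $I_4$). For $\phi(s_i)-\phi(s)$ it retains the factor $(s-s_i)^{\lambda}$ instead of discarding it for $\norm{\pi}^{\lambda}$; the resulting $(s-s_i)^{2\lambda}(s_{i+1}-s)^{-2\alpha}$ integrates over the cell to a beta-function constant times $(s_{i+1}-s_i)^{1+2(\lambda-\alpha)}$, and because the exponent now exceeds $1$ the sum over $i$ is controlled by $T\norm{\pi}^{2(\lambda-\alpha)}$. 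For $\phi(s_k)-\phi(r)$ it writes $(r-s_k)^{\lambda}=(r-s_k)^{\lambda-\alpha-\varepsilon}(r-s_k)^{\alpha+\varepsilon}$ and uses $r-s_k\le r-s$ to absorb $(r-s_k)^{\alpha+\varepsilon}$ into the kernel singularity, leaving
\begin{align*}
\int_s^T (r-s)^{\varepsilon-1}\wrt{r}\le\frac{(T-s)^{\varepsilon}}{\varepsilon},
\end{align*}
hence a bound $C\norm{\pi}^{\lambda-\alpha-\varepsilon}$ that is uniform in $s$ and has no singularity at $s=s_{i+1}$. You need one of these two devices (or an equivalent) to repair the cross-cell estimate; the uniform bound $2C\norm{\pi}^{\lambda}$ alone is not enough.
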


\begin{proof}
The proof follows that of Proposition 8 in \cite{amn2000}. We reproduce it as the estimates derived here will be required later, and also because there is a minor error in the original proof. \par
With $\Delta_i$ denoting $\left[s_i, s_{i+1}\right)$, for all $s \in (0, T)$, we have
\begin{align*}
&\norm{ \mathcal{K}^* \left( \phi^{\pi} - \phi \right) (s) }_E \\
&\qquad = \left\| \left( \phi^{\pi}(s) - \phi(s) \right) K(T, s)  + \int_s^T \left[ \phi^{\pi}(r) - \phi(r) - (\phi^{\pi}(s) - \phi(s) ) \right] K(\mathrm{d}r, s) \right\|_E \\
&\qquad = \left\| \sum_i \mathds{1}_{\Delta_i} (s) \left( \left( \phi(s_i) - \phi(s) \right) K(T, s) + \int_s^{s_{i+1}} \left[ \phi(s_i) - \phi(r) - (\phi(s_i) - \phi(s)) \right] K(\mathrm{d}r, s)  \right. \right. \\
&\qquad \qquad \qquad \qquad + \left. \left. \sum_{k \geq i+1} \int_{s_k}^{s_{k+1}} \left[ \phi(s_k) - \phi(r) - (\phi(s_i) - \phi(s)) \right] K(\mathrm{d}r, s) \right) \right\|_E \\
&\qquad \leq I_1(s) + I_2(s) + I_3(s) + I_4 (s),
\end{align*}
where
\begin{align} \label{IIntegrals}
\begin{split}
I_1 (s) &:= \sum_i \mathds{1}_{\Delta_i} (s) \left\| \phi(s_i) - \phi(s) \right\|_E \abs{K(T, s)}
\leq C \sum_i \mathds{1}_{\Delta_i} (s) \underset{\displaystyle := {\xi_{i, 1} (s)}}{\underbrace{(s - s_i)^{\lambda} s^{-\alpha} (T-s)^{-\alpha}}}, \\
I_2 (s) &:= \sum_i \mathds{1}_{\Delta_i} (s) \int_s^{s_{i+1}} \left\|\phi(s) - \phi(r) \right\|_E \abs{\pd{K(r, s)}{r}} \wrt{r}
\leq C \sum_i \mathds{1}_{\Delta_i} (s) \underset{\displaystyle := \xi_{i, 2}(s)}{\underbrace{(s_{i+1} - s)^{\lambda - \alpha}}}, \\
I_3(s) &:= \sum_i \mathds{1}_{\Delta_i} (s) \sum_{k \geq i+1} \int_{s_k}^{s_{k+1}} \left\|\phi(s_k) - \phi(r) \right\|_E \abs{\pd{K(r, s)}{r}} \wrt{r} \\
&\leq C \sum_i \mathds{1}_{\Delta_i} (s) \underset{\displaystyle := \xi_{i, 3}(s)}{\underbrace{\sum_{k \geq i+1} \int_{s_k}^{s_{k+1}} (r-s_k)^{\lambda} (r - s)^{-\alpha - 1} \wrt{r}}}, \\
I_4 (s) &:= \sum_i \mathds{1}_{\Delta_i} (s) \norm{\phi(s_i) - \phi(s)}_E \sum_{k \geq i+1} \int_{s_k}^{s_{k+1}} \abs{\pd{K(r, s)}{r}} \wrt{r}, \\
&\leq C \sum_i \mathds{1}_{\Delta_i} (s) \underset{\displaystyle := \xi_{i,4}(s)}{\underbrace{(s - s_i)^{\lambda} \left((s_{i+1} - s)^{-\alpha} - (T-s)^{-\alpha} \right)}}.
\end{split}
\end{align}
For the second term, we have
\begin{align*}
\int_0^T \left[ \sum_i \mathds{1}_{\Delta_i} (s) \xi_{i, 2} (s) \right]^2 \wrt{s} 
&\leq C \sum_i \int_{s_i}^{s_{i+1}} (s_{i+1} - s)^{2(\lambda - \alpha)} \wrt{s} \\
&\leq C \sum_i \abs{s_{i+1} - s_i}^{2(\lambda - \alpha) + 1} \\
&\leq C \norm{\pi}^{2(\lambda - \alpha)} \rightarrow 0.
\end{align*}
For the third term, since $r - s_k \leq r - s$ if $s \in [s_i, s_{i+1})$ and $s_k \geq s_{i+1}$, we have
\begin{align} \label{I3estimate}
\begin{split}
\xi_{i,3} (s)
&= \sum_{k \geq i+1} \int_{s_k}^{s_{k+1}} (r - s_k)^{\lambda - \alpha - \varepsilon} (r - s_k)^{\alpha + \varepsilon} (r - s)^{- \alpha - 1} \wrt{r} \\
&\leq \norm{\pi}^{\lambda - \alpha - \varepsilon} \sum_{k \geq i+1} \int_{s_k}^{s_{k+1}} (r - s)^{\varepsilon - 1} \wrt{r} \\
&\leq \frac{(T - s)^{\varepsilon}}{\varepsilon} \norm{\pi}^{\lambda - \alpha - \varepsilon},
\end{split}
\end{align}
where $\varepsilon$ is chosen such that $\lambda - \alpha - \varepsilon > 0$. Hence
\begin{align*}
\int_0^T \left[ \sum_i \mathds{1}_{\Delta_i} (s) \xi_{i, 3} (s) \right]^2 \wrt{s} 
&\leq C \norm{\pi}^{2(\lambda - \alpha - \varepsilon)} \int_0^T (T - s)^{2\varepsilon} \wrt{s} \\
&\leq C \, T^{1 + 2\varepsilon} \norm{\pi}^{2(\lambda - \alpha - \varepsilon)} \rightarrow 0.
\end{align*}
Finally for the first and fourth terms, we have
\begin{align} \label{I4estimate}
\begin{split}
\xi_{i, 1} (s) 
&\leq C \, (s - s_i)^{\lambda} (s_{i+1} - s)^{-\alpha} s^{-\alpha}, \quad \mathrm{and} \\
\xi_{i, 4} (s)
&\leq C \, (s - s_i)^{\lambda} (s_{i+1} - s)^{-\alpha}.
\end{split}
\end{align}
Thus for $k = 1, 4$, we obtain
\begin{align} \label{xiLimit}
\begin{split}
\int_0^T \left[ \sum_i \mathds{1}_{\Delta_i} (s) \xi_{i, k} (s) \right]^2 \wrt{s}
&\leq C \left( \sum_i \int_{s_i}^{s_{i+1}} (s - s_i)^{4\lambda} (s_{i+1} - s)^{-4\alpha} \wrt{s} \right)^{\frac{1}{2}} \left( \int_0^T s^{-4\alpha} \wrt{s} \right)^{\frac{1}{2}} \\
&\leq C \sqrt{B(4\lambda + 1, 1 - 4\alpha)} \sqrt{ \sum_i (s_{i+1} - s_i)^{4(\lambda - \alpha) + 1} }\\
&\leq C \left\| \pi \right\|^{2(\lambda - \alpha)} \rightarrow 0,
\end{split}
\end{align}
where $B(\cdot, \cdot)$ in the second line denotes the beta function and is well-defined since $4\lambda + 1 > 0$ and $1 - 4\alpha > 0$.
\end{proof}

\section{The operator $\mathcal{K}^* \otimes \mathcal{K}^*$ and H\"{o}lder bi-continuity}
In this section, we will continue to use $E$ to denote a general Banach space with norm $\left\Vert \cdot \right\Vert _{E}$.

\begin{definition} \label{kStarTensorOp}
Let $\mathcal{K}^* \otimes \mathcal{K}^*$ denote the following operator, 
\begin{align*}
(\mathcal{K}^* \otimes \mathcal{K}^{\ast })\psi (u,v)
&\vcentcolon=\psi (u,v)K(T,v)K(T,u)+K(T,v)A^{K}\big(\psi (\cdot ,v)\big)(u) \\
&\qquad+ K(T,u)A^{K} \big( \psi (u,\cdot )\big)(v) + B^{K}(\psi )(u,v),
\end{align*}
where 
\begin{align*}
& A^{K}(\phi )(s):=\int_{s}^{T}\left[ \phi (r)-\phi (s)\right] K(\mathrm{d} r,s) \\
& B^{K}(\psi )(u,v):=\int_{v}^{T}\int_{u}^{T}\psi 
\begin{pmatrix}
u & r_{1} \\ 
v & r_{2}
\end{pmatrix}
K(\mathrm{d}r_{1},u)K(\mathrm{d}r_{2},v),
\end{align*}
which is defined for any measurable function $\psi :[0,T]^{2}\rightarrow E$ for which the integrals on the right side exist.
\end{definition}
This definition is motivated by the following observation: if $\psi $ has the product form $\psi \left( s,t\right) =\psi _{1}\left( s\right) \otimes	\psi _{2}\left( t\right) $ for some $\psi _{1},\psi _{2}:\left[ 0,T\right] \rightarrow F$, then by applying the previous definition with $E:=F\otimes F$	(completed with respect to any cross-norm) we have 
\begin{align}
\left( \mathcal{K}^{\ast }\otimes \mathcal{K}^{\ast }\right) \psi (s,t)
= \left( \mathcal{K}^{\ast }\psi _{1}\right) (s)\otimes \left( \mathcal{K}^* \psi _{2}\right) (t),  \label{kStarProduct}
\end{align}
whenever the terms on the right side exist.

We also have the following technical lemma, which shows that the action of $\mathcal{K}^{\ast }\otimes \mathcal{K}^*$ on a function $\phi$ is the same as the iterated application of $\mathcal{K}^{\ast }$ to the first, then second variable (or vice-versa) of $\phi $. 

\begin{lemma} \label{iteratedKStar}
Let $\phi:[0, T]^2 \rightarrow E$, and let $g_s(t)$ denote $\mathcal{K}^* \left( \phi( \cdot, t) \right) (s)$. If $g_s \in D \left( \mathcal{K}^* \right)$, then
\begin{align*}
\mathcal{K}^* \left( g_s \right) (t) = \mathcal{K}^* \otimes \mathcal{K}^* \phi (s,t)
\end{align*} 
for all $(s, t) \in [0, T]^2$ for which both sides of the equation are well-defined. Similarly, if $h_t(s) := \mathcal{K}^* \left( \phi( s, \cdot) \right) (t) \in D \left( \mathcal{K}^* \right)$, then
\begin{align*}
\mathcal{K}^* \left( h_t \right) (s) = \mathcal{K}^* \otimes \mathcal{K}^* \phi (s,t).
\end{align*} 
\end{lemma}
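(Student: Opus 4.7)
The strategy is to verify both identities by a straight expansion of the one-variable definition \eqref{kStarDefn}, applied twice, and then to match the resulting four terms against Definition \ref{kStarTensorOp}.

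For the first identity, fix $s$ and apply \eqref{kStarDefn} to $\phi(\cdot, r_2)$ and to $\phi(\cdot, t)$; subtracting the two expressions and grouping the four $\phi$-values as in \eqref{rectInc} gives
\begin{align*}
g_s(r_2) - g_s(t) = [\phi(s, r_2) - \phi(s, t)] K(T, s) + \int_s^T \phi \begin{pmatrix} s & r_1 \\ t & r_2 \end{pmatrix} K(\mathrm{d}r_1, s).
\end{align*}
Substituting this into
\begin{align*}
\mathcal{K}^*(g_s)(t) = g_s(t) K(T, t) + \int_t^T [g_s(r_2) - g_s(t)] \, K(\mathrm{d}r_2, t)
\end{align*}
and distributing yields exactly four pieces: the boundary piece $g_s(t) K(T, t)$ expands to $\phi(s, t) K(T, s) K(T, t) + K(T, t) A^K(\phi(\cdot, t))(s)$, while the integral piece splits by linearity of the $K(\mathrm{d}r_2, t)$-integral into $K(T, s) A^K(\phi(s, \cdot))(t)$ plus the iterated integral
\begin{align*}
\int_t^T \int_s^T \phi \begin{pmatrix} s & r_1 \\ t & r_2 \end{pmatrix} K(\mathrm{d}r_1, s) \, K(\mathrm{d}r_2, t),
\end{align*}
which is precisely $B^K(\phi)(s, t)$ by Definition \ref{kStarTensorOp}. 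Summing the four pieces proves the first identity; no Fubini argument is needed here because the order of integration already matches that in the definition of $B^K$.

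For the second identity, the expansion is identical modulo interchanging the roles of the two arguments of $\phi$. The only difference is that the last iterated integral now appears in the opposite order, namely $\int_s^T \int_t^T \phi \begin{pmatrix} s & r_1 \\ t & r_2 \end{pmatrix} K(\mathrm{d}r_2, t) \, K(\mathrm{d}r_1, s)$, and identifying it with $B^K(\phi)(s, t)$ requires Fubini's theorem for signed measures. This is the main (and essentially only) non-trivial step. Condition \ref{amnCond}(ii) bounds the total-variation density of $K(\mathrm{d}r, u)$ by $C(r - u)^{-(\alpha + 1)}$, so the swap reduces to the joint absolute integrability of the integrand against $|K|(\mathrm{d}r_1, s) \otimes |K|(\mathrm{d}r_2, t)$ on $[s, T] \times [t, T]$. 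The standing hypothesis that both sides of the equation are well-defined supplies precisely this finiteness, after which Fubini delivers the desired equality.
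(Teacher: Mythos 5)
Your proof is correct and follows essentially the same route as the paper: expand $\mathcal{K}^*(g_s)(t)$ via \eqref{kStarDefn}, substitute the definition of $g_s$, and match the four resulting terms against Definition \ref{kStarTensorOp}. Your additional observation that the second identity requires an interchange of the order of integration to match the stated form of $B^K$ (whereas the first does not) is a point the paper glosses over with ``a similar computation,'' so it is a welcome and correct refinement rather than a deviation.
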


\begin{proof}
We have
\begin{align*} 
\mathcal{K}^* (g_s) (t) 
&= g_s(t) K(T, t) + \int_t^T \left[ g_s(r_2) - g_s(t) \right] K(\mathrm{d}r_2, t) \\
&= \left[ \phi(s, t) K(T, s) + A^K\big( \phi(\cdot, t) \big) (s) \right] K(T, t) \\
&\qquad + \int_t^T \left[ \phi (s, r_2) K(T, s) + A^K\big( \phi (\cdot, r_2)\big) (s) \right] K(\mathrm{d}r_2, t) \\
&\qquad - \int_t^T \left[ \phi (s, t) K(T, s) + A^K\big( \phi(\cdot, t) \big) (s) \right] K(\mathrm{d}r_2, t) \\
&= \mathcal{K}^* \otimes \mathcal{K}^* \phi (s, t),
\end{align*}
and the second statement can be proved by a similar computation.
\end{proof}

As with the operator $\mathcal{K}^{\ast }$, when $K(t,s)=\frac{1}{\Gamma (1-\alpha )}(t-s)^{-\alpha }$, $\mathcal{K}^{\ast }\otimes \mathcal{K}^*$ is precisely the mixed fractional derivative $D_{(T,T)^{-}}^{(\alpha,\alpha )}$ written in Marchaud form; see Chapter 24 in \cite{skm1993}. \par

Moving beyond product functions in the domain of $\mathcal{K}^{\ast }\otimes \mathcal{K}^{\ast }$, we extend the discussion to H\"{o}lder bi-continuous functions.

\begin{definition}
Let $0<\lambda \leq 1.$ We say that a function $\phi :[0,T]^{2}\rightarrow E$ is $\lambda $-H\"{o}lder bi-continuous in the norm of $E$ (or simply $\lambda $-H\"{o}lder bi-continuous in the case where $E$ is	finite-dimensional), if for all $u_{1},u_{2},v_{1},v_{2}\in [0,T]$ we have 
\begin{align*}
&\sup_{v\in \left[ 0,T\right] }\left\Vert \phi (u_{2},v)-\phi (u_{2},v)\right\Vert _{E}\leq C\,\left\vert u_{2}-u_{1}\right\vert ^{\lambda}, \\
&\sup_{u\in \left[ 0,T\right] }\left\Vert \phi (u,v_{2}) - \phi(u,v_{1})\right\Vert _{E}\leq C\,\left\vert v_{2}-v_{1}\right\vert^{\lambda}.
\end{align*}	
\end{definition}

\begin{definition}
Let $0<\lambda \leq 1.$ We say that a function $\phi :[0,T]^{2}\rightarrow E$ is strongly $\lambda $-H\"{o}lder bi-continuous in the norm of $E$, if for all $u_{1},u_{2},v_{1},v_{2}\in [0,T]$ we have 
\begin{align*}
\sup_{v\in \left[ 0,T\right] }\left\Vert \phi (u_{2},v)-\phi (u_{2},v)\right\Vert _{E}\leq C\,\left\vert u_{2}-u_{1}\right\vert ^{\lambda},
\quad \sup_{u\in \left[ 0,T\right] }\left\Vert \phi (u,v_{2}) - \phi(u,v_{1})\right\Vert _{E}\leq C\,\left\vert v_{2}-v_{1}\right\vert^{\lambda},
\end{align*}
and 
\begin{align*}
\left\Vert \phi 
\begin{pmatrix}
u_{1} & u_{2} \\ 
v_{1} & v_{2}
\end{pmatrix}
\right\Vert _{E}\leq C\,\left\vert u_{2}-u_{1}\right\vert ^{\lambda}\left\vert v_{2}-v_{1}\right\vert ^{\lambda}.
\end{align*}
\end{definition}

We have the following simple lemmas.

\begin{lemma} \label{strongToWeak}
If a function $\phi$ is $\lambda$-H\"{o}lder bi-continuous, then it is strongly $\frac{\lambda}{2}$-H\"{o}lder bi-continuous.
\end{lemma}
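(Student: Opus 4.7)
The claim breaks into two pieces: the two one-variable Hölder bounds, and the rectangular-increment bound. The plan is to handle the former by a trivial downgrade of the exponent using boundedness of the domain, and the latter by writing the rectangular increment in two different ways and combining the resulting bounds via the inequality $\min(a,b) \le \sqrt{ab}$.

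First, because $[0,T]$ is bounded, for any $u_1,u_2\in[0,T]$ we have $|u_2-u_1|^\lambda \le T^{\lambda/2}\,|u_2-u_1|^{\lambda/2}$, and similarly for the $v$-variable, so the two one-variable Hölder bounds of bi-continuity immediately imply the corresponding bounds with exponent $\lambda/2$ (at the cost of multiplying the constant by $T^{\lambda/2}$). This handles the first two conditions in the definition of strong $\tfrac{\lambda}{2}$-Hölder bi-continuity.

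For the rectangular increment, I would use the two algebraic rewritings
\begin{align*}
\phi\begin{pmatrix} u_1 & u_2 \\ v_1 & v_2\end{pmatrix}
&= \big[\phi(u_2,v_2)-\phi(u_1,v_2)\big]-\big[\phi(u_2,v_1)-\phi(u_1,v_1)\big] \\
&= \big[\phi(u_2,v_2)-\phi(u_2,v_1)\big]-\big[\phi(u_1,v_2)-\phi(u_1,v_1)\big].
\end{align*}
Applying the first (resp.\ second) one-variable Hölder bound from bi-continuity to each bracket of the first (resp.\ second) decomposition gives
\begin{align*}
\left\Vert \phi\begin{pmatrix} u_1 & u_2 \\ v_1 & v_2\end{pmatrix}\right\Vert_E
\le \min\!\big(2C\,|u_2-u_1|^{\lambda},\; 2C\,|v_2-v_1|^{\lambda}\big).
\end{align*}

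The final step is to invoke $\min(a,b)\le\sqrt{ab}$ for $a,b\ge 0$, which turns the preceding $\min$ into $2C\,|u_2-u_1|^{\lambda/2}|v_2-v_1|^{\lambda/2}$, giving precisely the required product bound with exponent $\lambda/2$ in each variable. There is no substantive obstacle here; the only ``trick'' is the passage from a $\min$ to a geometric mean, which is what forces the halving of the Hölder exponent and makes $\lambda/2$ the natural exponent for the strong version.
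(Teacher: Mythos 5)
Your proposal is correct and follows essentially the same route as the paper: both reduce to the two bounds $\Vert\phi\begin{pmatrix}u_1 & u_2\\ v_1 & v_2\end{pmatrix}\Vert_E \leq C|u_2-u_1|^{\lambda}$ and $\leq C|v_2-v_1|^{\lambda}$ and then take the geometric mean (your $\min(a,b)\leq\sqrt{ab}$ is exactly the paper's ``take square roots and multiply''). You are in fact slightly more complete, since you spell out the two decompositions of the rectangular increment justifying those bounds and the trivial exponent downgrade for the one-variable conditions, both of which the paper leaves implicit.
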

\begin{proof}
For any $u_1, u_2, v_1, v_2 \in [0, T]$, we have
\begin{align} \label{phiEqA}
\left\Vert \phi 
\begin{pmatrix}
u_{1} & u_{2} \\ 
v_{1} & v_{2}
\end{pmatrix}
\right\Vert _{E}\leq C\, \norm{u_2 -u_1}^{\lambda}
\end{align}
and
\begin{align} \label{phiEqB}
\left\Vert \phi 
\begin{pmatrix}
u_{1} & u_{2} \\ 
v_{1} & v_{2}
\end{pmatrix}
\right\Vert _{E}\leq C\, \norm{v_2- v_1}^{\lambda}.
\end{align}
Now taking square roots on both sides of \eqref{phiEqA} and \eqref{phiEqB} and multiplying them together gives the proof.
\end{proof}

\begin{lemma} \label{productHolder} 
Let $\psi :[0,T]\rightarrow E$ and $\phi:[0,T]\rightarrow \mathcal{L}(E;E)$, where $\mathcal{L}(E;E)$ denotes the space of bounded linear operators from $E$ to $E$ equipped with the operator norm. Assume that $\psi $ and $\phi $ are $\lambda $-H\"{o}lder continuous in the norm of $E$ and $\mathcal{L}(E;E)$ respectively, and let $\phi \circ	\psi :[0,T]^{2}\rightarrow E$ be the function given by 
\begin{align*}
\left[ \phi \circ \psi \right] \left( s,t\right) :=\phi \left( t\right) 
\left[ \psi \left( s\right) \right] \text{, \ for }\left( s,t\right) \in [0,T]^{2}.
\end{align*}
Then $\phi \circ \psi $ is strongly $\lambda $-H\"{o}lder bi-continuous in the norm of $E$.
\end{lemma}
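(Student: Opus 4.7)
The plan is to exploit the linearity of $\phi(t)$ as an operator on $E$, which will cause the rectangular increment to factor nicely. All three bounds required in the definition of strong $\lambda$-H\"{o}lder bi-continuity then follow from direct computation plus the fact that $\phi$ and $\psi$, being continuous on $[0,T]$, are bounded in their respective norms.

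First, I would verify the two one-sided bounds. Fixing $v \in [0,T]$, I would write
\begin{align*}
[\phi \circ \psi](u_2, v) - [\phi \circ \psi](u_1, v) = \phi(v)\bigl[\psi(u_2) - \psi(u_1)\bigr],
\end{align*}
so that the norm in $E$ is bounded by $\|\phi(v)\|_{\mathcal{L}(E;E)} \|\psi(u_2) - \psi(u_1)\|_E \leq \bigl(\sup_{t} \|\phi(t)\|_{\mathcal{L}(E;E)}\bigr) C |u_2 - u_1|^{\lambda}$. The supremum is finite because $\phi$ is H\"{o}lder continuous on $[0,T]$, hence bounded. The symmetric estimate for differences in $v$ is obtained identically, using that $\sup_s \|\psi(s)\|_E < \infty$.

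The key step is the rectangular increment. By the bilinear form of $\phi \circ \psi$, expanding gives
\begin{align*}
[\phi \circ \psi]\begin{pmatrix} u_1 & u_2 \\ v_1 & v_2 \end{pmatrix}
&= \phi(v_1)[\psi(u_1)] + \phi(v_2)[\psi(u_2)] - \phi(v_2)[\psi(u_1)] - \phi(v_1)[\psi(u_2)] \\
&= \bigl[\phi(v_2) - \phi(v_1)\bigr]\bigl[\psi(u_2) - \psi(u_1)\bigr].
\end{align*}
Taking norms and using the operator norm inequality, this is at most $\|\phi(v_2) - \phi(v_1)\|_{\mathcal{L}(E;E)} \cdot \|\psi(u_2) - \psi(u_1)\|_E$, which by the H\"{o}lder hypotheses is bounded by $C^2 |v_2 - v_1|^{\lambda}|u_2 - u_1|^{\lambda}$, yielding the desired third estimate.

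There is no real obstacle here; the statement is essentially a packaging result, and the only subtlety to flag is that the boundedness of $\phi$ and $\psi$ on $[0,T]$ (used in the one-sided estimates) is an automatic consequence of continuity on a compact interval, and thus no extra hypothesis is needed.
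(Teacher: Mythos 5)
Your proof is correct and follows essentially the same route as the paper: the one-sided estimates come from bounding one factor by its supremum norm (finite by continuity on the compact interval), and the rectangular increment factors as $\bigl[\phi(v_2)-\phi(v_1)\bigr]\bigl[\psi(u_2)-\psi(u_1)\bigr]$, which immediately gives the product bound. No gaps.
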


\begin{proof}
Let $\norm{\phi}_{\infty} := \sup_{t \in [0, T]} \norm{\phi(t)}_{\mathcal{L}(E;E)}$ and $\norm{\psi}_{\infty} := \sup_{t \in [0, T]} \norm{\psi(t)}_E$ denote the supremum norms of $\phi$ and $\psi$ respectively. \par
We have
\begin{align*}
\norm{ \phi (\psi) (u, v_2) - \phi (\psi) (u, v_1) }_E
&= \norm{ \phi(u) \left( \psi(v_2) - \psi(v_1) \right)}_E \\
&\leq \norm{\phi(u)}_{\mathcal{L}(E;E)} \norm{\psi(v_2) - \psi(v_1)}_E \\
&\leq C \norm{\phi}_{\infty} \abs{v_2 - v_1}^{\lambda},
\end{align*}
\begin{align*}
\norm{ \phi (\psi) (u_2, v) - \phi (\psi) (u_1, v) }_E
&= \norm{ \left( \phi(u_2) -\phi(u_1) \right) \left( \psi(v) \right)}_E \\
&\leq \norm{\phi(u_2) - \phi(u_1)}_{\mathcal{L}(E;E)} \norm{\psi(v)}_E \\
&\leq C \norm{\psi}_{\infty} \abs{u_2 - u_1}^{\lambda},
\end{align*}
and
\begin{align*}
\norm{ \phi (\psi) \begin{pmatrix}
	u_1 & u_2 \\
	v_1 & v_2
	\end{pmatrix} }_E
&= \norm{ \phi(u_1) \left( \psi(v_1) \right) + \phi(u_2) \left( \psi(v_2) \right) - \phi(u_1) \left( \psi(v_2) \right) - \phi(u_2) \left(\psi(v_1) \right) }_E \\
&\leq \norm{\left( \phi(u_2) - \phi(u_1) \right)}_{\mathcal{L}(E;E)} \norm{ \left(\psi(v_2) - \psi(v_1) \right) }_E \\
&\leq C \abs{u_2 - u_1}^{\lambda} \abs{v_2 - v_1}^{\lambda}.
\end{align*}
\end{proof}

\begin{lemma}
Let $\mathcal{K}^{\ast }\otimes \mathcal{K}^{\ast }$ denote the operator in	Definition \ref{kStarTensorOp}, and assume the kernel satisfies Condition \ref{amnCond} for some $\alpha \in \lbrack 0,\frac{1}{4})$. If $\psi:[0,T]^{2}\rightarrow E$ is strongly $\lambda $-H\"{o}lder bi-continuous in the norm of $E$ and $\lambda >\alpha $, then 
\begin{align*}
\int_{[0,T]^{2}}\left\Vert \left( \mathcal{K}^{\ast }\otimes \mathcal{K}^* \right) (\psi )(u,v)\right\Vert _{E}^{4}\,\mathrm{d}u\,\mathrm{d}v < \infty,
\end{align*}
and 
\begin{align*}
\int_{0}^{T}\left\Vert \left( \mathcal{K}^{\ast }\otimes \mathcal{K}^* \right) (\psi )(r,r)\right\Vert_{E}^{2}, \mathrm{d}r < \infty.
\end{align*}
\end{lemma}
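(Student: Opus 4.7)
My plan is to bound each of the four summands in the expansion of $(\mathcal{K}^{\ast}\otimes \mathcal{K}^{\ast})\psi(u,v)$ from Definition \ref{kStarTensorOp} by an explicit product of power functions in $u$, $v$, $T-u$ and $T-v$, and then check integrability of the fourth powers on $[0,T]^{2}$ and of the squares on the diagonal $\{u=v\}$. All the ingredients are already present: Condition \ref{amnCond}(i) controls $\abs{K(T,\cdot)}$, Condition \ref{amnCond}(ii) controls the signed measure $K(\mathrm{d}r,s)$, and the three clauses of strong $\lambda$-H\"{o}lder bi-continuity control $\psi$ itself, its one-variable increments, and its rectangular increments.

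Before estimating the four terms I will observe that strong bi-continuity implies $\psi$ is uniformly bounded on $[0,T]^{2}$, by writing $\psi(u,v)=\psi(0,0)+[\psi(u,v)-\psi(0,v)]+[\psi(0,v)-\psi(0,0)]$ and invoking the one-variable H\"{o}lder bounds. For the first summand this yields $\norm{\psi(u,v)K(T,u)K(T,v)}_{E}\leq C\,u^{-\alpha}(T-u)^{-\alpha}v^{-\alpha}(T-v)^{-\alpha}$. For the second summand I combine the uniform H\"{o}lder bound $\norm{\psi(r,v)-\psi(u,v)}_{E}\leq C\abs{r-u}^{\lambda}$ with Condition \ref{amnCond}(ii) to get $\norm{A^{K}(\psi(\cdot,v))(u)}_{E}\leq C\int_{u}^{T}(r-u)^{\lambda-\alpha-1}\wrt{r}\leq C(T-u)^{\lambda-\alpha}$ since $\lambda>\alpha$, and hence the second summand is bounded by $C\,v^{-\alpha}(T-v)^{-\alpha}(T-u)^{\lambda-\alpha}$. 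The third summand is handled symmetrically. For the fourth summand $B^{K}(\psi)(u,v)$ I apply the rectangular H\"{o}lder estimate $\norm{\psi\begin{pmatrix}u & r_{1}\\ v & r_{2}\end{pmatrix}}_{E}\leq C(r_{1}-u)^{\lambda}(r_{2}-v)^{\lambda}$ and Fubini to obtain a product bound $C(T-u)^{\lambda-\alpha}(T-v)^{\lambda-\alpha}$.

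With these four pointwise bounds, both integrability conclusions are immediate. On $[0,T]^{2}$, the fourth powers involve factors of the form $u^{-4\alpha}$, $v^{-4\alpha}$, $(T-u)^{-4\alpha}$, $(T-v)^{-4\alpha}$ multiplied by powers with non-negative exponent; since $\alpha<\tfrac14$ gives $4\alpha<1$, each of these is integrable in one variable and Fubini gives $L^{4}([0,T]^{2};E)$. On the diagonal $u=v=r$, squaring the pointwise bounds produces at worst $r^{-4\alpha}(T-r)^{-4\alpha}$, coming from the first summand, which is still integrable in $r$ under the same condition $4\alpha<1$; the cross and $B^{K}$ terms are better behaved near the endpoints.

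The one step that is not entirely routine is the fourth summand. Mere bi-continuity would only give $\norm{\psi\begin{pmatrix}u & r_{1}\\ v & r_{2}\end{pmatrix}}_{E}\leq C\min\{(r_{1}-u)^{\lambda},(r_{2}-v)^{\lambda}\}$, which does not factorise and hence does not let Fubini separate the two $K(\mathrm{d}r_{i},\cdot)$ integrals; one would be left with an inner integral that fails to decouple from $v$ (or $u$). The rectangular-increment clause in the definition of \emph{strong} $\lambda$-H\"{o}lder bi-continuity is precisely what is needed here, and this is the step where the word ``strong'' is actually used.
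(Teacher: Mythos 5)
Your proof is correct and follows essentially the same route as the paper: bound each of the four summands in Definition \ref{kStarTensorOp} pointwise by products of powers of $u$, $v$, $T-u$, $T-v$ using Condition \ref{amnCond} and the three clauses of strong H\"{o}lder bi-continuity, then conclude integrability from $4\alpha<1$. The paper's proof is a terser version of exactly these four estimates, and your closing observation about where the rectangular-increment clause (i.e.\ the word ``strong'') is genuinely needed is a useful, accurate gloss rather than a deviation.
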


\begin{proof}
It follows that for $\lambda > \alpha$, because of Condition \ref{amnCond} and strong H\"{o}lder bi-continuity, we have 
\begin{align*}
\begin{split}
\norm{\psi(u, v) K(T, u) K(T, v)}_E &\leq \frac{C}{v^{\alpha} (T - v)^{\alpha} u^{\alpha} (T - u)^{\alpha}}, \\
\norm{K(T, v) A^K (\psi(\cdot, v) ) (u)}_E &\leq C \frac{(T - u)^{\lambda - \alpha}}{v^{\alpha} (T - v)^{\alpha}}, \\
\norm{K(T, u) A^K (\psi(u, \cdot) ) (v)}_E &\leq C \frac{(T - v)^{\lambda - \alpha}}{u^{\alpha} (T - u)^{\alpha}}
\end{split}
\end{align*}
and
\begin{align*}
\norm{B^K (\psi) (u, v)}_E &\leq C (T- u)^{\lambda - \alpha} (T - v)^{\lambda - \alpha}
\end{align*}
for all $(u, v) \in (0, T)^2$. The proof is complete when we use the fact that $\alpha < \frac{1}{4}$.
\end{proof}

\section{Main result}
We now arrive at the main result of the paper, which demonstrates that complementary regularity between the integrand and integrator is a sufficient but not necessary condition for the multi-dimensional Young-Stieltjes integral to exist. The following theorem is key to showing the main result, but due to its length, we will defer its proof until the end of the section.
\begin{theorem} \label{nualartPropNew} 
Given a Banach space $E$, let $\psi :[0,T]^{2}\rightarrow E$ be a function which is strongly $\lambda $-H\"{o}lder bi-continuous in the norm of $E$. For any partition $\{(u_{i},v_{j})\}$ of $[0,T]^{2}$, let $\psi ^{\pi}:[0,T]^{2}\rightarrow E$ denote 
\begin{align*}
\psi ^{\pi }(u,v):=\sum_{i,j}\psi (u_{i},v_{j})\mathds{1}_{[u_{i},u_{i+1})}(u)\mathds{1}_{[v_{j},v_{j+1})}(v).
\end{align*}
In addition, let $\mathcal{K}^{\ast }\otimes \mathcal{K}^{\ast }$ denote the operator in Definition \ref{kStarTensorOp}, where the Volterra kernel $K$ satisfies Condition \ref{amnCond} for some $\alpha \in \left[ 0,\frac{1}{4} \right)$. Then if $\lambda >\alpha $, we have 
\begin{align} \label{result1}
\lim_{\left\Vert \pi \right\Vert \rightarrow 0}\int_{[0,T]^{2}}\left\Vert \left( \mathcal{K}^{\ast }\otimes \mathcal{K}^{\ast }\left( \psi ^{\pi}-\psi \right) \right) (u,v)\right\Vert_{E}^{2}\,\mathrm{d}u\,\mathrm{d}v = 0,
\end{align}
and 
\begin{align} \label{result2}
\lim_{\left\Vert \pi \right\Vert \rightarrow 0}\int_{0}^{T}\left\Vert \left( \mathcal{K}^{\ast }\otimes \mathcal{K}^{\ast }\left( \psi ^{\pi } - \psi \right) \right) (r,r)\right\Vert _{E}\,\mathrm{d}r=0.
\end{align}
\end{theorem}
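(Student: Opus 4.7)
The plan is to imitate the one-dimensional strategy of Proposition \ref{nualartProp} in two dimensions, using the four-term decomposition of $\mathcal{K}^* \otimes \mathcal{K}^*$ from Definition \ref{kStarTensorOp}. Writing $(\mathcal{K}^* \otimes \mathcal{K}^*)(\psi^\pi-\psi)(u,v)$ as the sum of a corner piece $(\psi^\pi-\psi)(u,v)K(T,u)K(T,v)$, two edge pieces of the form $K(T,v)A^K\bigl((\psi^\pi-\psi)(\cdot,v)\bigr)(u)$ (and its symmetric counterpart), and an interior piece $B^K(\psi^\pi-\psi)(u,v)$, I would estimate each of the four pieces separately in $L^2([0,T]^2;E)$. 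The role of strong Hölder bi-continuity throughout is to control any genuinely two-dimensional rectangular increment of $\psi$ by the product bound $C|u_2-u_1|^\lambda|v_2-v_1|^\lambda$; the marginal 1D Hölder inequalities take care of single-variable increments.

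For the corner piece, on $[u_i,u_{i+1}) \times [v_j,v_{j+1})$ I would telescope
\begin{align*}
\psi(u_i,v_j)-\psi(u,v) = [\psi(u_i,v_j)-\psi(u_i,v)] + [\psi(u_i,v)-\psi(u,v)]
\end{align*}
and use 1D Hölder on each bracket. Squaring and combining with $|K(T,u)K(T,v)| \leq C\,u^{-\alpha}(T-u)^{-\alpha}v^{-\alpha}(T-v)^{-\alpha}$, the resulting $L^2$ integral factorizes into a product of two 1D estimates of the $\xi_{i,1}$ type in \eqref{xiLimit}, yielding a mesh factor $\|\pi\|^{2(\lambda-\alpha)}$ since $4\alpha<1$. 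For the first edge piece, fixing $v \in [v_j,v_{j+1})$ and rewriting the integrand of $A^K\bigl((\psi^\pi-\psi)(\cdot,v)\bigr)(u)$, for $r_1 \in [u_k, u_{k+1})$ with $k \geq i$, as
\begin{align*}
(\psi^\pi-\psi)(r_1,v) - (\psi^\pi-\psi)(u,v) = \Bigl([\psi(u_k,v_j)-\psi(r_1,v_j)] - [\psi(u_i,v_j)-\psi(u,v_j)]\Bigr) + \psi\begin{pmatrix} u & r_1 \\ v & v_j\end{pmatrix},
\end{align*}
the parenthesised part is exactly the 1D increment of $(\psi(\cdot,v_j))^\pi - \psi(\cdot,v_j)$ and so inherits the $\xi_{i,2},\xi_{i,3},\xi_{i,4}$ bounds from \eqref{IIntegrals} uniformly in $v_j$; the $\psi$ term is controlled by strong bi-Hölder as $C|r_1-u|^\lambda|v-v_j|^\lambda$, which after integration against $|K(\mathrm{d}r_1,u)|$ yields a $(u_{i+1}-u)^{\lambda-\alpha}|v-v_j|^\lambda$-type factor. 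The $K(T,v)^2$ prefactor is $v$-integrable since $2\alpha<1$, so the whole $L^2$ estimate vanishes with the mesh. The second edge piece is symmetric.

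The hard part is the interior piece $B^K$. A direct computation shows that on $[u_i,u_{i+1}) \times [v_j,v_{j+1})$ the rectangular increment $(\psi^\pi-\psi)\begin{pmatrix}u & r_1 \\ v & r_2\end{pmatrix}$ equals $-\psi\begin{pmatrix}u & r_1 \\ v & r_2\end{pmatrix}$ when $r_1 < u_{i+1}$ or $r_2 < v_{j+1}$, and equals $\psi\begin{pmatrix}u_i & u_k \\ v_j & v_\ell\end{pmatrix} - \psi\begin{pmatrix}u & r_1 \\ v & r_2\end{pmatrix}$ when $r_1 \in [u_k,u_{k+1})$, $r_2 \in [v_\ell,v_{\ell+1})$ with $k \geq i+1$, $\ell \geq j+1$. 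In the marginal regime, strong bi-Hölder gives $\norm{\psi\begin{pmatrix}u & r_1 \\ v & r_2\end{pmatrix}}_E \leq C(r_1-u)^\lambda(r_2-v)^\lambda$, so the double $r_1,r_2$ integral against the kernel factorizes and produces a bound of order $(u_{i+1}-u)^{\lambda-\alpha}(v_{j+1}-v)^{\lambda-\alpha} \leq C\|\pi\|^{2(\lambda-\alpha)}$. In the bulk regime I would further decompose the difference of the two pure rectangular increments using the bilinear additivity of rectangular increments, so that each resulting piece admits a product bound whose two factors align either with the mesh (via $(u-u_i)^\lambda$ or $(v-v_j)^\lambda$) or with the kernel variable (via $(r_1-u_k)^\lambda$ or $(r_2-v_\ell)^\lambda$); the latter are handled by the $\xi_{i,3}$-type splitting of \eqref{I3estimate}, producing independent $\|\pi\|^{\lambda-\alpha-\varepsilon}$ gains in each variable. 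The main obstacle is the bookkeeping: one must ensure that the decomposition factorizes cleanly over $r_1$ and $r_2$, and that the telescoping sums $\sum_{k \geq i+1}$ and $\sum_{\ell \geq j+1}$ are each handled without degrading the kernel singularities in the orthogonal variable.

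For the diagonal statement \eqref{result2}, I would apply the same four-piece decomposition evaluated at $(r,r)$ and integrate in $r \in [0,T]$. The pointwise bounds derived above are all of product form in the two variables; applying Cauchy--Schwarz, the $L^1$-in-$r$ estimate becomes a product of two $L^2$-type mesh factors, each of which vanishes with $\|\pi\|$. The condition $\alpha < 1/4$ is again precisely what ensures that the diagonal boundary singularities arising from $K(T,r)^2 \lesssim r^{-2\alpha}(T-r)^{-2\alpha}$ remain integrable on $[0,T]$.
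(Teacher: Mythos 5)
Your proposal follows essentially the same route as the paper's proof: the same four-term decomposition of $\mathcal{K}^{\ast}\otimes\mathcal{K}^{\ast}$, the same telescoping of the corner piece and reduction of the edge pieces to the one-dimensional $\xi_{i,k}$ estimates of Proposition \ref{nualartProp}, the same three-region splitting of $B^K$ with cancellation of the common rectangular increment in the bulk, and Cauchy--Schwarz on the diagonal for \eqref{result2}. The only slip is cosmetic: in the marginal regime of $B^K$ the $r_2$-integral runs over all of $[v,T)$, so the correct bound is of order $(u_{i+1}-u)^{\lambda-\alpha}(T-v)^{\lambda-\alpha}$ rather than $(u_{i+1}-u)^{\lambda-\alpha}(v_{j+1}-v)^{\lambda-\alpha}$, which still vanishes with the mesh since $\lambda>\alpha$.
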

The following corollary is an immediate consequence of the preceding theorem and Lemma \ref{strongToWeak}.
\begin{corollary} \label{npNewCoro} 
Let $\psi :[0,T]^{2}\rightarrow E$ be $\lambda $-H\"{o}lder bi-continuous in the norm of $E$. In addition, let $\mathcal{K}^* \otimes \mathcal{K}^*$ denote the operator in Definition \ref{kStarTensorOp}, where the Volterra kernel $K$ satisfies Condition \ref{amnCond} for some $\alpha \in \left[ 0,\frac{1}{4} \right)$. Then both \eqref{result1} and \eqref{result2} in Theorem \ref{nualartPropNew} remain true if $\lambda > 2\alpha$.
\end{corollary}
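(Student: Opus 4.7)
The corollary is announced as an immediate consequence of Theorem~\ref{nualartPropNew} together with Lemma~\ref{strongToWeak}, so my plan is simply to chain these two results. Starting from the hypothesis that $\psi:[0,T]^{2}\rightarrow E$ is $\lambda$-Hölder bi-continuous in the norm of $E$, Lemma~\ref{strongToWeak} gives, at the cost of halving the exponent, that $\psi$ is strongly $\tfrac{\lambda}{2}$-Hölder bi-continuous in the same norm. Note that the step-function approximants $\psi^{\pi}$ in the corollary are exactly the same objects as in Theorem~\ref{nualartPropNew}, so no adjustment of that side of the statement is needed.

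With this upgraded regularity in hand, I would apply Theorem~\ref{nualartPropNew} directly, letting $\tfrac{\lambda}{2}$ play the role that $\lambda$ plays there. The only condition the theorem imposes on this exponent is that it strictly exceed $\alpha$. But $\tfrac{\lambda}{2}>\alpha$ is precisely the standing hypothesis $\lambda>2\alpha$ of the corollary, and Condition~\ref{amnCond} on the Volterra kernel is assumed in both statements with the same $\alpha\in[0,\tfrac{1}{4})$. All hypotheses of the theorem are therefore met for this $\psi$ and its approximants $\psi^{\pi}$, and the two limits \eqref{result1} and \eqref{result2} follow verbatim.

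There is no genuine obstacle to surmount; the argument is a two-line composition of already proved results. The only conceptual point worth flagging is why the threshold in the corollary is $2\alpha$ rather than $\alpha$: the proof of Lemma~\ref{strongToWeak} multiplies the square roots of two one-variable Hölder estimates on the rectangular increment, and the resulting factor $\tfrac{1}{2}$ loss in the exponent is exactly what forces the stricter hypothesis $\lambda>2\alpha$. In particular, no reworking of the estimates in the proof of Theorem~\ref{nualartPropNew} is required, since those estimates are applied as a black box to the strongly $\tfrac{\lambda}{2}$-Hölder bi-continuous function produced by the lemma.
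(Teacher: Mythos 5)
Your proposal is correct and coincides with the paper's own argument: the paper states the corollary as an immediate consequence of Theorem \ref{nualartPropNew} and Lemma \ref{strongToWeak}, which is exactly the composition you carry out, with the exponent bookkeeping $\tfrac{\lambda}{2}>\alpha \iff \lambda>2\alpha$ handled correctly. Nothing further is needed.
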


We now prove the following proposition.
\begin{proposition}	\label{kStarRProp} 
Let $R$ be the covariance function of a Volterra process with kernel satisfying Condition \ref{amnCond} for some $\alpha \in [0,\frac{1}{4})$. If $\phi :[0,T]^{2}\rightarrow \mathbb{R}$ is a strongly $\lambda $-H\"{o}lder bi-continuous function with $\lambda >\alpha$, or a $\lambda $-H\"{o}lder bi-continuous function with $\lambda > 2\alpha$, then the Young integral $\int_{[0,T]^{2}}\phi (s,t)\,\mathrm{d}R(s,t)$ exists, and we have 
\begin{align*}
\int_{[0,T]^{2}}\phi (u,v)\,\mathrm{d}R(u,v)=\int_{0}^{T}\mathcal{K}^{\ast }\otimes \mathcal{K}^* \phi (r,r)\,\mathrm{d}r.
\end{align*}
\end{proposition}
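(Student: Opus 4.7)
The idea is to rewrite the Riemann--Stieltjes sum in the form $\int_0^T \mathcal{K}^*\otimes \mathcal{K}^*\phi^{\pi}(r,r)\,\mathrm{d}r$ and then invoke Theorem \ref{nualartPropNew} (or Corollary \ref{npNewCoro} in the bi-continuous case) to pass to the limit.

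First I would unpack what $\mathcal{K}^*$ does to indicator functions. Since $\mathcal{K}^*(\phi \mathds{1}_{[0,a)})(s)=\mathds{1}_{[0,a)}(s)\bigl(\phi(s)K(a,s)+\int_s^a[\phi(r)-\phi(s)]K(\mathrm{d}r,s)\bigr)$, taking $\phi\equiv 1$ yields $\mathcal{K}^*(\mathds{1}_{[0,a)})(s)=\mathds{1}_{[0,a)}(s)\,K(a,s)=:\bar K(a,s)$. Writing $\mathds{1}_{[u_i,u_{i+1})}=\mathds{1}_{[0,u_{i+1})}-\mathds{1}_{[0,u_i)}$ gives $\mathcal{K}^*(\mathds{1}_{[u_i,u_{i+1})})(s)=\bar K(u_{i+1},s)-\bar K(u_i,s)$. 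Since $\phi^{\pi}(u,v)=\sum_{i,j}\phi(u_i,v_j)\mathds{1}_{[u_i,u_{i+1})}(u)\mathds{1}_{[v_j,v_{j+1})}(v)$ is a finite linear combination of product step functions, the product identity \eqref{kStarProduct} (together with linearity) gives
\begin{align*}
\bigl(\mathcal{K}^*\otimes \mathcal{K}^*\phi^{\pi}\bigr)(s,t)=\sum_{i,j}\phi(u_i,v_j)\bigl(\bar K(u_{i+1},s)-\bar K(u_i,s)\bigr)\bigl(\bar K(v_{j+1},t)-\bar K(v_j,t)\bigr).
\end{align*}

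Next, I would specialise to the diagonal $s=t=r$ and use the Volterra representation of the covariance $R(s,t)=\int_0^T \bar K(s,r)\bar K(t,r)\,\mathrm{d}r$, which yields the rectangular-increment identity
\begin{align*}
R\begin{pmatrix}u_i&u_{i+1}\\ v_j&v_{j+1}\end{pmatrix}=\int_0^T \bigl(\bar K(u_{i+1},r)-\bar K(u_i,r)\bigr)\bigl(\bar K(v_{j+1},r)-\bar K(v_j,r)\bigr)\,\mathrm{d}r.
\end{align*}
Integrating the previous display over $r\in[0,T]$ therefore gives the key formula
\begin{align*}
\int_0^T \bigl(\mathcal{K}^*\otimes \mathcal{K}^*\phi^{\pi}\bigr)(r,r)\,\mathrm{d}r = \sum_{i,j}\phi(u_i,v_j)\,R\begin{pmatrix}u_i&u_{i+1}\\ v_j&v_{j+1}\end{pmatrix},
\end{align*}
i.e.\ the Riemann--Stieltjes sum for $\int_{[0,T]^2}\phi\,\mathrm{d}R$ along the partition $\pi$ is exactly $\int_0^T \mathcal{K}^*\otimes\mathcal{K}^*\phi^{\pi}(r,r)\,\mathrm{d}r$.

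Finally I would apply Theorem \ref{nualartPropNew} in the strong case (respectively Corollary \ref{npNewCoro} in the bi-continuous case) to conclude that
\begin{align*}
\int_0^T\bigl\lVert \mathcal{K}^*\otimes \mathcal{K}^*(\phi^{\pi}-\phi)(r,r)\bigr\rVert\,\mathrm{d}r\xrightarrow{\|\pi\|\to 0}0,
\end{align*}
so that the Riemann--Stieltjes sums converge to $\int_0^T\mathcal{K}^*\otimes\mathcal{K}^*\phi(r,r)\,\mathrm{d}r$, which is finite by the lemma at the end of Section 3. This yields both the existence of the 2D Young--Stieltjes integral and the claimed identity. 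The work is essentially bookkeeping: the main difficulty (uniform control of $\mathcal{K}^*\otimes\mathcal{K}^*(\phi^{\pi}-\phi)$) has already been absorbed into Theorem \ref{nualartPropNew}, and the only non-routine step here is recognising that $\mathcal{K}^*$ sends indicators of intervals $[u_i,u_{i+1})$ to increments of $\bar K$, which is what unlocks the Fubini-type identification of the Riemann sum with the diagonal integral.
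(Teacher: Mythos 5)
Your proposal is correct and follows essentially the same route as the paper's proof: rewrite the Riemann--Stieltjes sum via the Volterra factorization of the covariance increments as $\int_0^T \mathcal{K}^*\otimes\mathcal{K}^*\phi^{\pi}(r,r)\,\mathrm{d}r$, then pass to the limit using \eqref{result2} of Theorem \ref{nualartPropNew}. The only difference is cosmetic: you make explicit the action of $\mathcal{K}^*$ on indicator functions, which the paper compresses into the notation $K(\Delta_i,r)$.
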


\begin{proof}
Fixing an arbitrary partition of $[0, T]^2$ as $\pi = \left\{ \left( u_i, v_j \right) \right\}$, we denote $\phi^{\pi}$ as
\begin{align*}
\phi^{\pi} (u, v) := \sum_{i, j} \phi(u_i, v_j) \mathds{1}_{[u_i, u_{i+1})} (u) \mathds{1}_{[v_j, v_{j+1})} (v).
\end{align*}
We have
\begin{align} \label{kFirst}
\begin{split}
\int_{[0, T]^2} \phi^{\pi} (u, v) \wrt{R(u, v)} 
&=\sum_{i, j} \phi(u_i, v_j) R \begin{pmatrix}
u_i & u_{i+1} \\
v_j & v_{j+1}
\end{pmatrix} \\
&= \sum_{i, j} \phi(u_i, v_j) \int_0^T K(\Delta_i, r) K(\Delta_j, r) \wrt{r} \\
&= \int_0^T \sum_{i,j} \mathcal{K}^* \left( \phi(u_i, v_j) \mathds{1}_{[u_i, u_{i+1})} \right) (r) \, \mathcal{K}^* \left( \mathds{1}_{[v_j, v_{j+1})} \right) (r) \wrt{r} \\
&= \int_0^T \mathcal{K}^* \otimes \mathcal{K}^* \phi^{\pi} (r, r) \wrt{r}.
\end{split}
\end{align}
By Theorem \ref{nualartPropNew}, this converges to $\int_0^T \mathcal{K}^* \otimes \mathcal{K}^* \phi (r, r) \wrt{r}$ as the mesh of the partition goes to zero.
\end{proof}

As a particular application of this result, we can consider the case where $\phi $ is a product of two single-variable $\lambda $-H\"{o}lder continuous functions $\phi _{1}$ and $\phi _{2}$. We then have
\begin{align*}
\int_{[0,T]^{2}}\phi (s,t)\,\mathrm{d}R(s,t)& =\int_{[0,T]^{2}}\phi_{1}(s)\phi _{2}(t)\,\mathrm{d}R(s,t) \\
& =\int_{0}^{T} \mathcal{K}^* \otimes \mathcal{K}^* (\phi_{1}\,\phi_{2}) (r,r) \,\mathrm{d}r \\
& =\int_{0}^{T}\mathcal{K}^* \phi_{1}(r) \mathcal{K}^* \phi_{2}(r)\,\mathrm{d}r.
\end{align*}

Another application of Theorem \ref{nualartPropNew} is in proving the existence of iterated 2D Young integrals.

\begin{proposition}	\label{kStarRProp2} 
Let $R$ be the covariance function of a Volterra process with kernel satisfying Condition \ref{amnCond} for some $\alpha \in \left[ 0,\frac{1}{4} \right)$. If $\psi _{1}, \psi _{2}:[0,T]^{2}\rightarrow \mathbb{R}$ are both strongly $\lambda $-H\"{o}lder bi-continuous functions with $\lambda >\alpha$, or both $\lambda $-H\"{o}lder bi-continuous functions with $\lambda > 2\alpha$, then the Young integral 
\begin{align*}
\int_{[0,T]^{2}} \int_{[0,T]^{2}}\psi _{1}(q,s)\,\psi _{2}(r,t) \mathrm{d}R(q,r)\,\mathrm{d}R(s,t)
\end{align*}
exists, and is equal to 
\begin{align} \label{kkExp}
\int_{[0,T]^{2}} \mathcal{K}^* \otimes \mathcal{K}^* \, \psi_{1}(r_{1},r_{2}) \, \mathcal{K}^* \otimes \mathcal{K}^* \, \psi_{2}(r_{1},r_{2}) \, \mathrm{d}r_{1} \,\mathrm{d}r_{2}.
\end{align}
\end{proposition}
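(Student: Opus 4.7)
The plan is to iterate Proposition \ref{kStarRProp} twice: first show that the inner integral produces a sufficiently regular function on $[0, T]^2$, then apply Proposition \ref{kStarRProp} once more to integrate this function against $\mathrm{d}R$. By Lemma \ref{strongToWeak} it suffices to handle the case in which $\psi_1, \psi_2$ are strongly $\lambda$-H\"{o}lder bi-continuous with $\lambda > \alpha$, since a $\lambda$-H\"{o}lder bi-continuous function with $\lambda > 2\alpha$ is automatically strongly $(\lambda/2)$-H\"{o}lder bi-continuous with $\lambda/2 > \alpha$.

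First I would fix $(s, t) \in [0, T]^2$ and note that the inner integrand $(q, r) \mapsto \psi_1(q, s) \psi_2(r, t)$ factors as the product of a $\lambda$-H\"{o}lder function in $q$ (with H\"{o}lder constant independent of $s$) and a $\lambda$-H\"{o}lder function in $r$ (with H\"{o}lder constant independent of $t$), each factor being uniformly bounded by continuity on the compact $[0, T]^2$. The computation of Lemma \ref{productHolder} then shows this product is strongly $\lambda$-H\"{o}lder bi-continuous in $(q, r)$. Proposition \ref{kStarRProp} together with the product identity \eqref{kStarProduct} then yields
\begin{align*}
I(s, t) := \int_{[0, T]^2} \psi_1(q, s) \psi_2(r, t) \, \mathrm{d}R(q, r) = \int_0^T \mathcal{K}^*(\psi_1(\cdot, s))(r_1) \, \mathcal{K}^*(\psi_2(\cdot, t))(r_1) \, \mathrm{d}r_1.
\end{align*}

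Next I would verify that $I$ itself is strongly $\lambda$-H\"{o}lder bi-continuous in $(s, t)$. Subtracting the integral representations and applying Cauchy-Schwarz reduces the task to the auxiliary estimate $\norm{\mathcal{K}^*(\psi_i(\cdot, s_2) - \psi_i(\cdot, s_1))}_{L^2(0, T)} \leq C |s_2 - s_1|^\lambda$, which I would derive by rerunning the four-piece decomposition \eqref{IIntegrals}--\eqref{xiLimit} in the proof of Proposition \ref{nualartProp} for the function $\Phi := \psi_i(\cdot, s_2) - \psi_i(\cdot, s_1)$. Here $\norm{\Phi}_\infty \leq C|s_2-s_1|^\lambda$ and $\Phi$ is $\lambda$-H\"{o}lder in its argument with constant $C|s_2-s_1|^\lambda$ (using strong bi-continuity, since the rectangular increment of $\psi_i$ is dominated by a product of the two separate H\"{o}lder factors), and $\alpha < 1/4$ secures the required $L^2$-integrability near the endpoints. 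Applying the same estimate jointly in both coordinates produces the rectangular-increment bound $\abs{I \begin{pmatrix} s_1 & s_2 \\ t_1 & t_2 \end{pmatrix}} \leq C |s_2 - s_1|^\lambda |t_2 - t_1|^\lambda$.

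With $I$ strongly $\lambda$-H\"{o}lder bi-continuous, a second application of Proposition \ref{kStarRProp} gives $\int_{[0, T]^2} I(s, t) \, \mathrm{d}R(s, t) = \int_0^T \mathcal{K}^* \otimes \mathcal{K}^* I(r, r) \, \mathrm{d}r$. To identify this with \eqref{kkExp}, I would compute $\mathcal{K}^* \otimes \mathcal{K}^* I$ by applying $\mathcal{K}^*$ first in $s$ and then in $t$, using Lemma \ref{iteratedKStar} at each step and Fubini to commute $\mathcal{K}^*$ with the $\mathrm{d}r_1$-integral defining $I$; the hypotheses of Fubini follow from the $L^2$-estimate above together with the bound $\abs{K(\mathrm{d}r, s)} \leq C(r-s)^{-\alpha-1}\mathrm{d}r$ from Condition \ref{amnCond}, since $\lambda > \alpha$ makes the corresponding double integral absolutely convergent. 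The outcome is
\begin{align*}
\mathcal{K}^* \otimes \mathcal{K}^* I(s, t) = \int_0^T \mathcal{K}^* \otimes \mathcal{K}^* \psi_1 (r_1, s) \cdot \mathcal{K}^* \otimes \mathcal{K}^* \psi_2 (r_1, t) \, \mathrm{d}r_1,
\end{align*}
and substituting into the preceding display and relabeling the outer variable as $r_2$ produces \eqref{kkExp}. The main technical obstacle is the $L^2$-estimate on $\mathcal{K}^*$ of the H\"{o}lder difference in the second step; the rest is essentially bookkeeping with Fubini and Lemma \ref{iteratedKStar}.
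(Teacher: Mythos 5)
Your argument is essentially correct but follows a genuinely different route from the paper's. The paper never touches the inner integral on its own: it discretizes \emph{both} factors, writes the resulting quadruple sum $\sum_{i,j,k,l}\psi_1(q_i,s_j)\psi_2(r_k,t_l)R\left(\begin{smallmatrix}q_i & q_{i+1}\\ r_k & r_{k+1}\end{smallmatrix}\right)R\left(\begin{smallmatrix}s_j & s_{j+1}\\ t_l & t_{l+1}\end{smallmatrix}\right)$ via the Volterra representation of $R$ as the $L^2([0,T]^2)$ inner product $\langle \mathcal{K}^*\otimes\mathcal{K}^*\psi_1^{\pi_1},\,\mathcal{K}^*\otimes\mathcal{K}^*\psi_2^{\pi_2}\rangle$, and then passes to the limit using \eqref{result1} of Theorem \ref{nualartPropNew} together with continuity of the inner product. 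That route needs no regularity propagation and no Fubini argument for the singular measure $K(\mathrm{d}r,s)$ --- the two pieces that carry all the technical weight in your plan. Your route, by contrast, buys a stronger-looking conclusion (genuine existence of the iterated integral, inner integral first) at the cost of two extra lemmas: the estimate $\norm{\mathcal{K}^*(\psi_i(\cdot,s_2)-\psi_i(\cdot,s_1))}_{L^2}\leq C|s_2-s_1|^{\lambda}$ (which does go through exactly as you describe, since strong bi-continuity makes the difference $\lambda$-H\"older with constant $C|s_2-s_1|^{\lambda}$ and $\alpha<\tfrac14$ handles the endpoint singularities), and the interchange of $\mathcal{K}^*\otimes\mathcal{K}^*$ with $\int_0^T\cdot\,\mathrm{d}r_1$ (which is absolutely convergent because the relevant majorant is of order $r_1^{-2\alpha}(T-r_1)^{-2\alpha}$ times $(\rho-s)^{\lambda-\alpha-1}$ factors, all integrable). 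Two caveats. First, the paper's notion of ``existence'' for this object is convergence of the joint discretization $\psi_1^{\pi_1},\psi_2^{\pi_2}$ as both meshes vanish; you prove existence of the successively iterated integral, which is a related but not identical statement, and you should say a word about why your version implies (or replaces) theirs. Second, your two technical steps are only sketched; they are correct in outline but are precisely where the work lives, so they would need to be written out for the proof to be complete.
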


\begin{proof}
Denoting $\psi_1^{\pi_1}$ and $\psi_2^{\pi_2}$ as
\begin{align*}
&\psi_1^{\pi_1} (q, s) := \sum_{i, j} \psi_1(q_i, s_j) \mathds{1}_{[q_i, q_{i+1})} (q) \mathds{1}_{[s_j, s_{j+1})} (s), \\
&\psi_2^{\pi_2} (r, t) := \sum_{k, l} \psi_2(r_k, t_l) \mathds{1}_{[r_k, r_{k+1})} (r) \mathds{1}_{[t_l, t_{l+1})} (t),
\end{align*}
where $\pi_1 \times \pi_2 = \left\{ \left( q_i, s_j \right) \times \left(r_k, t_l \right) \right\}$ is an arbitrary partition of $[0, T]^2 \times [0, T]^2$, we have
\begin{align} \label{kSecond}
\begin{split}
&\int_{[0, T]^2} \int_{[0, T]^2} \psi_1^{\pi_1} (q, s) \, \psi_2^{\pi_2} (r, t) \wrt{R(q, r)} \wrt{R(s, t)} \\
&\qquad \qquad= \sum_{i, j ,k, l} \psi_1 (q_i, s_j) \psi_2 (r_k, t_l) R \begin{pmatrix}
q_i & q_{i+1} \\
r_k & r_{k+1}
\end{pmatrix} R \begin{pmatrix}
s_j & s_{j+1} \\
t_l & t_{l+1}
\end{pmatrix} \\
&\qquad \qquad= \sum_{i, j, k, l} \psi_1 (q_i, s_j) \psi_2 (r_k, t_l) \int_0^T K(\Delta_i, r_1) K(\Delta_k, r_1) \wrt{r_1} \int_0^T K(\Delta_j, r_2) K(\Delta_l, r_2) \wrt{r_2} \\
&\qquad \qquad= \int_0^T \int_0^T \sum_{i, j} \psi_1(q_i, s_j) K(\Delta_i, r_1) K(\Delta_j, r_2) \sum_{k, l} \psi_2 (r_k, t_l) K(\Delta_k, r_1) K(\Delta_l, r_2) \wrt{r_1} \wrt{r_2} \\
&\qquad \qquad= \int_{[0, T]^2} \mathcal{K}^* \otimes \mathcal{K}^* \, \psi_1^{\pi_1} (r_1, r_2) \, \mathcal{K}^* \otimes \mathcal{K}^* \, \psi_2^{\pi_2} (r_1, r_2) \wrt{r_1} \wrt{r_2}.
\end{split}
\end{align}
Using Theorem \ref{nualartPropNew}, and the continuity of the inner product \\
$\left\langle f, g \right\rangle := \int_{[0, T]^2} f(r_1, r_2) g(r_1, r_2) \wrt{r_1} \wrt{r_2}$, we see that the above expression converges to \eqref{kkExp}.
\end{proof} 

\begin{proof} [\bf Proof of Theorem \ref{nualartPropNew}]
(i) From the definition of $\mathcal{K}^* \otimes \mathcal{K}^*$, for all $(u, v) \in (0, T)^2$, we have
\begin{align} \label{estimateC}
\begin{split}
\mathcal{K}^* \otimes \mathcal{K}^* \left( \psi^{\pi} - \psi \right) (u, v)
&= \left( \psi^{\pi }(u, v) - \psi(u, v) \right) K(T, v) K(T, u) + K(T, v) A^K \big( (\psi^{\pi} - \psi) (\cdot, v) \big) (u) \\
&\qquad + K(T, u) A^K \big( (\psi^{\pi} - \psi) (u, \cdot) \big) (v) + B^K (\psi^{\pi} - \psi) (u, v).
\end{split}
\end{align}
We will examine each of the terms on the right side of the above expression. To bound the terms, we will use the functions $\xi_{i, k}$, $k = 1, 2, 3, 4$, in Proposition \ref{nualartProp}, which are defined for any partition $\pi = \{r_i\}$ of $[0, T]$. In particular, recall that for $k = 1, 2, 3, 4$,
\begin{align} \label{xiConv}
\int_0^T \left[ \sum_i \mathds{1}_{\Delta_i} (r) \xi_{i, k} (r) \right]^2 \wrt{r} \xrightarrow{\norm{\pi} \rightarrow 0} 0, 
\end{align}
and here we use $\Delta_i$ to denote $\left[ r_i, r_{i+1} \right)$. \par
For the first term on the right of \eqref{estimateC}, we have
\begin{align} \label{A1estimate}
\begin{split}
&\norm{ \left( \psi^{\pi}(u, v) - \psi(u, v) \right) K(T, v) K(T, u) }_E \\
&\qquad \qquad \qquad \leq \norm{ K(T, u) K(T, v) \sum_{i, j} \mathds{1}_{\Delta_i} (u) \mathds{1}_{\Delta_j} (v) \left[ \psi(u_i, v_j) - \psi(u, v) \right] }_E \\
&\qquad \qquad \qquad \leq \norm{ K(T, u) K(T, v) \sum_{i, j} \mathds{1}_{\Delta_i} (u) \mathds{1}_{\Delta_j} (v) \left[ \psi(u_i, v_j) - \psi(u, v_j) + \psi(u, v_j) - \psi(u, v) \right] }_E \\
&\qquad \qquad \qquad \leq C\left( \frac{1}{v^{\alpha} (T-v)^{\alpha}} \sum_i \mathds{1}_{\Delta_i} (u) \xi_{i, 1} (u) + \frac{1}{u^{\alpha} (T-u)^{\alpha}} \sum_j \mathds{1}_{\Delta_j} (v) \xi_{j, 1}(v) \right),
\end{split}
\end{align}
and thus
\begin{align*}
\int_{[0, T]^2} \norm{ \left( \psi^{\pi}(u, v) - \psi(u, v) \right) K(T, v) K(T, u) }_E^2 \wrt{u} \wrt{v} \rightarrow 0
\end{align*}
For the second term, we have
\begin{align} \label{A2estimate}
\norm{ K(T, v) A^K\big((\psi^{\pi} - \psi) (\cdot, v)\big) (u) }_E 
\leq \frac{C}{v^{\alpha} (T-v)^{\alpha}} \sum_i \mathds{1}_{\Delta_i} (u) \left[ \xi_{i, 2}(u) + \xi_{i, 3}(u) + \xi_{i, 4}(u) \right],
\end{align}
and this yields
\begin{align*}
\int_{[0, T]^2} \norm{ K(T, v) A^K\big((\psi^{\pi} - \psi) (\cdot, v)\big) (u) }_E^2 \wrt{u} \wrt{v} \rightarrow 0
\end{align*}
as the mesh of the partition vanishes. The third term is handled similarly by interchanging $u$ and $v$. \par
	
The last term $B^K(\psi^{\pi} - \psi) (u, v) $ can be written as
\begin{align} \label{estimateB}
\begin{split}
&\sum_{i, j} \mathds{1}_{\Delta_i}(u) \mathds{1}_{\Delta_j}(v)
\left( \int_u^{u_{i+1}} \int_v^T f(r_1, r_2) \wrt{r_2}\wrt{r_1} 
+ \int_v^{v_{j+1}} \int_{u_{i+1}}^T f(r_1, r_2) \wrt{r_1} \wrt{r_2} \right. \\
&\fqquad \left. + \sum_{\substack{k \geq i+1 \\ l \geq j+1}} \int_{u_k}^{u_{k+1}} \int_{v_l}^{v_{l+1}} f(r_1, r_2) \wrt{r_2}\wrt{r_1} \right),
\end{split}
\end{align}
where $f$ denotes (suppressing the dependence on $u, v$ in the notation)
\begin{align*}
f(r_1, r_2)
:= \left( \psi^{\pi} \begin{pmatrix}
u & r_1 \\
v & r_2
\end{pmatrix} - \psi \begin{pmatrix}
u & r_1 \\
v & r_2
\end{pmatrix} \right) \pd{K(r_2, v)}{r_2} \pd{K(r_1, u)}{r_1}.
\end{align*}
For the next few estimates, we will fix $i$ and $j$ and consider each summand in \eqref{estimateB} term by term. Note that the presence of $\mathds{1}_{\Delta_i} (u) \mathds{1}_{\Delta_j} (v)$ means that we can take $u$ and $v$ to lie in $[u_i, u_{i+1})$ and $[v_j, v_{j+1})$ respectively.

For the first term, since $\psi^{\pi} \begin{pmatrix}
u & r_1 \\
v & r_2
\end{pmatrix} = \psi^{\pi} \begin{pmatrix}
u_i & u_i \\
v & r_2
\end{pmatrix} = 0$ on $[u, u_{i+1}) \times [v, T)$, we have
\begin{align} \label{newEstimate1}
\begin{split}
&\norm{ \mathds{1}_{\Delta_i} (u) \mathds{1}_{\Delta_j} (v) \int_u^{u_{i+1}} \int_v^T f(r_1, r_2) \wrt{r_2}\wrt{r_1} }_E \\
&\fqquad= \norm{ \mathds{1}_{\Delta_i} (u) \mathds{1}_{\Delta_j} (v) \int_u^{u_{i+1}} \left( \int_v^T - \psi \begin{pmatrix}
u & r_1 \\
v & r_2
\end{pmatrix} K(\mathrm{d}r_2, v) \right) K(\mathrm{d}r_1, u)}_E \\
&\fqquad\leq C \, \left| u_{i+1} - u \right|^{\lambda - \alpha} \left| T - v \right|^{\lambda - \alpha},
\end{split}
\end{align}
where we use Condition \ref{amnCond} and the strong H\"{o}lder bi-continuity of $\psi$ to obtain the estimate. \par
Similarly for the second term in \eqref{estimateB}, we have
\begin{align} \label{newEstimate2}
\begin{split}
&\norm{ \mathds{1}_{\Delta_i} (u) \mathds{1}_{\Delta_j} (v) \int_v^{v_{j+1}} \int_{u_{i+1}}^T f(r_1, r_2) \wrt{r_1} \wrt{r_2} }_E \\
&\fqquad= \norm{ \mathds{1}_{\Delta_i} (u) \mathds{1}_{\Delta_j} (v) \int_v^{v_{j+1}} \left( \int_{u_{i+1}}^T - \psi \begin{pmatrix}
u & r_1 \\
v & r_2
\end{pmatrix} K(\mathrm{d}r_2, v) \right) K(\mathrm{d}r_1, u)}_E \\
&\fqquad\leq C \, \left| T - u \right|^{\lambda - \alpha} \left| v_{j+1} - v \right|^{\lambda - \alpha},
\end{split}
\end{align}
For the last term in \eqref{estimateB}, note that when $(r_1, r_2) \in [u_k, u_{k+1}) \times [v_l, v_{l+1})$, we have
\begin{align*}
&\mathds{1}_{\Delta_i} (u) \mathds{1}_{\Delta_j} (v) \int_{u_k}^{u_{k+1}} \int_{v_l}^{v_{l+1}} f(r_1, r_2) \wrt{r_2} \wrt{r_1} \\
&\qquad \qquad = \mathds{1}_{\Delta_i} (u) \mathds{1}_{\Delta_j} (v) \int_{u_k}^{u_{k+1}} \left( \int_{v_l}^{v_{l+1}} \left[ \psi \begin{pmatrix}
u_i & u_k \\
v_j & v_l
\end{pmatrix} - \psi \begin{pmatrix}
u & r_1 \\
v & r_2
\end{pmatrix} \right] K(\mathrm{d}r_2, v) \right) K(\mathrm{d}r_1, u).
\end{align*}
In addition, we can rewrite the integrand as
\begin{align*}
&\psi \begin{pmatrix}
u_i & u_k \\
v_j & v_l
\end{pmatrix} - \psi \begin{pmatrix}
u & r_1 \\
v & r_2
\end{pmatrix} \\
&\qquad= \psi \begin{pmatrix}
u & u_k \\
v & v_l
\end{pmatrix} + \psi \begin{pmatrix}
u_i & u \\
v_j & v
\end{pmatrix} + \psi \begin{pmatrix}
u & u_k \\
v_j & v
\end{pmatrix} + \psi \begin{pmatrix}
u_i & u \\
v & v_l
\end{pmatrix} 
- \psi \begin{pmatrix}
u & u_k \\
v & v_l
\end{pmatrix} - \psi \begin{pmatrix}
u_k & r_1 \\
v & r_2
\end{pmatrix} - \psi \begin{pmatrix}
u & u_k \\
v_l & r_2
\end{pmatrix} \\
&\qquad= \psi \begin{pmatrix}
u_i & u \\
v_j & v
\end{pmatrix} + \psi \begin{pmatrix}
u & u_k \\
v_j & v
\end{pmatrix} + \psi \begin{pmatrix}
u_i & u \\
v & v_l
\end{pmatrix} - \psi \begin{pmatrix}
u_k & r_1 \\
v & r_2
\end{pmatrix} - \psi \begin{pmatrix}
u & u_k \\
v_l & r_2
\end{pmatrix},  
\end{align*}
which is essentially taking the difference between the two rectangular increments over $[u_i, u_k) \times [v_j, v_l)$ and $[u, r_1) \times [v, r_2)$ by canceling out the rectangular increment over the common region $[u, u_k) \times [v, v_l)$. Now, using strong bi-H\"{o}lder continuity and the fact that $u \leq u_k \leq r_1$ and $v \leq v_l \leq r_2$, one can verify that
$\norm{\psi \begin{pmatrix}
u_i & u_k \\
v_j & v_l
\end{pmatrix} - \psi \begin{pmatrix}
u & r_1 \\
v & r_2
\end{pmatrix}}_E$ is bounded above by
\begin{align*}
C \left[ \abs{u - u_i}^{\lambda} \abs{v - v_j}^{\lambda} + \abs{r_1 - u}^{\lambda} \left( \abs{v- v_j}^{\lambda} + \abs{r_2 - v_l}^{\lambda} \right) + \left( \abs{u - u_i}^{\lambda} + \abs{r_1 - u_k}^{\lambda} \right) \abs{r_2 - v}^{\lambda} \right].
\end{align*}
We thus obtain the bound
\begin{align} \label{newEstimate3}
\begin{split}
&\norm{ \sum_{k \geq i+1, l \geq j+1} \int_{u_k}^{u_{k+1}} \int_{v_l}^{v_{l+1}} f(r_1, r_2) \wrt{r_2}\wrt{r_1} }_E \\
&\qquad \qquad \leq C \left( \xi_{i, 4}(u) \xi_{j, 4}(v) + \left[\xi_{i, 3}(u) + \xi_{i, 4}(u) \right] (T - v)^{\lambda - \alpha} + (T - u)^{\lambda - \alpha} \left[ \xi_{j, 3}(v) + \xi_{j, 4}(v)\right] \right)
\end{split}
\end{align}
when $(u, v) \in  [u_i, u_{i+1}) \times [v_j, v_{j+1})$. \par 
From \eqref{newEstimate1}, \eqref{newEstimate2} and \eqref{newEstimate3}, we see that $\norm{ B^K (\psi^{\pi} - \psi) (u, v) }_E $ is bounded above by (up to multiplication by a constant)
\begin{align} \label{A4estimate}
\begin{split}
&\left( \sum_i \mathds{1}_{\Delta_i} (u) \xi_{i, 4}(u) \right) \left( \sum_j \mathds{1}_{\Delta_j} (v) \xi_{j, 4} (v) \right) + \left( \sum_i \mathds{1}_{\Delta_i} (u) \left[\xi_{i, 2} (u) + \xi_{i, 3}(u) + \xi_{i, 4}(u) \right] \right) (T - v)^{\lambda - \alpha} \\
&\qquad+ (T - u)^{\lambda - \alpha} \left( \sum_j \mathds{1}_{\Delta_j} (v) \left[ \xi_{i, 2} (v) + \xi_{j, 3}(v) + \xi_{j, 4}(v)\right] \right),
\end{split}
\end{align}
and thus
\begin{align*}
\int_{[0, T]^2} \norm{ B^K(\psi^{\pi} - \psi) (u, v) }^2_E \wrt{u} \wrt{v} \rightarrow 0
\end{align*}
as $\norm{\pi} \rightarrow 0$.
	
(ii) To show the second result \eqref{result2}, we set $u = v = r$ and simply apply Cauchy-Schwarz to each term in \eqref{A1estimate}, \eqref{A2estimate} and \eqref{A4estimate} before using \eqref{xiConv}. 
\end{proof}

\end{document}